\newcommand\groundset{\Omega}
\newcommand\allsets{\mathcal{P}\left(\groundset\right)}
\newcommand\dign{\mathcal{S}_n}
\newcommand\field{\mathbb{R}}
\newcommand\perms{\mathfrak{S}_n}
\newcommand\firstcom{\textrm{Com}\left(\perms\right)}
\newcommand\kdign{\dign^{\otimes k}}
\newcommand\com[1]{\textrm{Com}_{#1}\left(\perms\right)}
\newcommand\gorb[1]{\mathsf{B}_{#1}}
\newcommand\bl[1]{\mathsf{#1}}
\newcommand\permsv{\mathfrak{S}_V}
\newcommand\permse{\mathfrak{S}_E}
\newcommand\nvert{f}
\newcommand\galg{\mathcal{G}_\nvert}
\newcommand\ideal{\mathcal{I}_\nvert}
\begin{document}

\newtheorem{definition}{Definition}
\newtheorem{theorem}{Theorem}
\newtheorem{lemma}{Lemma}
\newtheorem{conjecture}{Conjecture}
\newtheorem{remark}{Remark}
\newtheorem{corollary}{Corollary}

\title{On some combinatorial properties of the orbits on subsets}
\author{Xavier Buchwalder\\
CWI Amsterdam \\ \small{xpjb@cwi.nl}}

\maketitle

\begin{abstract} We introduce generalised orbit algebras. The purpose here is to measure how some combinatorial properties can characterize the action of a group of permutations of the elements of $\groundset$ on the subsets of $\groundset$. The similarity with orbit algebras is such that it took the author a long time to find a generalised orbit algebra not arising from a permutation group.

\end{abstract}

\section{Introduction}

Let $\groundset$ be a finite set and $\allsets$ its powerset. 
For every set $A$, we shall denote by $\vert A\vert$ the number of elements in $A$.
We define a \emph{strongly regular partition} of $\allsets$ to be any partition of $\allsets$ into blocs $\gorb{1},\ldots,\gorb{s}$ such that~:

\begin{enumerate}
\item The blocs contain sets of the same size.
\item For any bloc $\gorb{i}$, the set of all the complements of the members of $\gorb{i}$ is also a bloc, say $\gorb{c(i)}$.
\item For any two blocs $\gorb{i}$ and $\gorb{j}$, the number of members of $\gorb{j}$ that are included in a member $A$ of $\gorb{i}$ is independent of the choice of $A$ in $\gorb{i}$. We denote by $\binom{\gorb{i}}{\gorb{j}}$ this number.
\end{enumerate}

We first remark that this definition is somewhat equivalent to having a tactical decomposition between each pair of blocs. As an example, if $\groundset=\{1,2,3\}$, the following partition is strongly regular :

\begin{center}
\begin{tabular}[width=\textwidth]{cccccc}
$\emptyset$ & $\{1\},\{2\}$ &$\{3\}$& 
$\{1,2\}$& $\{1,3\},\{2,3\}$&
 $\{1,2,3\}$\\
\end{tabular}
\end{center}

The main concern of the paper is the study of strongly regular partitions, through an algebraic point of view. A long standing question for the author was whether there exists a strongly regular partition not arising as the set of orbits of a group of permutations of $\groundset$ on its powerset. This question is answered at the end of the paper.

Along the way, various results about the reconstruction conjecture in graph theory are shown to apply for strongly regular partitions. Among them the results of L.Lov\'asz and V.M\"uller play a key role : our point of view is to try to explain and extend these results and the counting strategy behind them. For this purpose, we prove the following :

\begin{lemma}
For any two blocs $\gorb{i}$ and $\gorb{j}$, the number of members of $\gorb{j}$ that contain a member $A$ of $\gorb{i}$ is independent of the choice of $A$ in $\gorb{i}$. Moreover, this number is equal to $\frac{\binom{\Omega}{\gorb{j}}}{\binom{\Omega}{\gorb{i}}}\binom{\gorb{j}}{\gorb{i}}$ and to $\binom{\gorb{c(i)}}{\gorb{c(j)}}$.
\end{lemma}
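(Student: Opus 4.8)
The plan is to prove the three assertions together by combining a straightforward double count with the complementation symmetry supplied by property~2. Fix two blocs $\gorb{i}$ and $\gorb{j}$ and, for $A \in \gorb{i}$, write $u(A)$ for the number of members of $\gorb{j}$ containing $A$. First I would count in two ways the set of incident pairs $(A,B)$ with $A\in\gorb{i}$, $B\in\gorb{j}$ and $A\subseteq B$. Grouping by $B$ and using property~3, each $B\in\gorb{j}$ contains exactly $\binom{\gorb{j}}{\gorb{i}}$ members of $\gorb{i}$, so the number of pairs is $\binom{\groundset}{\gorb{j}}\binom{\gorb{j}}{\gorb{i}}$, where I am writing $\binom{\groundset}{\gorb{j}}=\lvert\gorb{j}\rvert$ for the number of members of $\gorb{j}$ (every member of a bloc being a subset of $\groundset$). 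Grouping instead by $A$ yields $\sum_{A\in\gorb{i}}u(A)$. This already pins down the \emph{average} of $u$ over $\gorb{i}$, but by itself it cannot show that $u$ is constant, which is the real content of the statement.

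To force constancy I would pass to complements. The elementary equivalence $A\subseteq B \iff B^{c}\subseteq A^{c}$ turns the condition ``$B$ contains $A$'' into ``$B^{c}$ is contained in $A^{c}$''. By property~2, as $A$ ranges over $\gorb{i}$ its complement $A^{c}$ ranges over $\gorb{c(i)}$, and as $B$ ranges over $\gorb{j}$ its complement ranges over $\gorb{c(j)}$; moreover $B\mapsto B^{c}$ is a bijection. Hence $u(A)$ equals the number of members of $\gorb{c(j)}$ contained in $A^{c}\in\gorb{c(i)}$, which by property~3 is exactly $\binom{\gorb{c(i)}}{\gorb{c(j)}}$ and in particular does not depend on the choice of $A$. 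This simultaneously establishes that the up-count is well defined and proves the second claimed equality.

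Finally I would feed the now-established constancy back into the double count: since $u(A)=\binom{\gorb{c(i)}}{\gorb{c(j)}}$ for all $A\in\gorb{i}$, summing over the $\binom{\groundset}{\gorb{i}}=\lvert\gorb{i}\rvert$ members of $\gorb{i}$ and equating with the pair count gives $\binom{\groundset}{\gorb{i}}\,u = \binom{\groundset}{\gorb{j}}\binom{\gorb{j}}{\gorb{i}}$, whence $u=\frac{\binom{\groundset}{\gorb{j}}}{\binom{\groundset}{\gorb{i}}}\binom{\gorb{j}}{\gorb{i}}$, the first claimed formula.

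The one genuine idea here is recognising that the double count is not enough on its own --- it only controls the sum, not the individual terms --- and that the complementation symmetry of property~2 is precisely what upgrades the average into a constant. The hard part is thus conceptual rather than computational: spotting that ``contains, within $\gorb{j}$'' is the mirror image, under complementation, of ``is contained, within $\gorb{c(j)}$'', so that the hypothesis already available for the down-count transports verbatim to the up-count. Once that bijection is in place, everything else is bookkeeping.
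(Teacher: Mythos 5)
Your proof is correct and follows essentially the same route the paper takes: the complementation bijection $A\subseteq B \iff B^c\subseteq A^c$ together with properties 2 and 3 identifies the up-count with $\binom{\gorb{c(i)}}{\gorb{c(j)}}$ (this is exactly how the paper reads off the matrix of $\partial$ in the orbit basis), and the double count over incident pairs then yields the relation $\binom{\groundset}{\gorb{i}}\binom{\gorb{c(i)}}{\gorb{c(j)}}=\binom{\groundset}{\gorb{j}}\binom{\gorb{j}}{\gorb{i}}$, which the paper cites as "a simple counting argument". Your observation that the double count alone only controls the average, and that complementation is what upgrades it to constancy, is exactly the right division of labour.
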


\begin{theorem}
If $P$ is a strongly regular partition, then for any two blocs $\gorb{i}$ and $\gorb{j}$ of $P$,  
the number of members of $\gorb{j}$ that intersects a subset of size $r$ of a given member $A$ of $\gorb{i}$ is independent of the choice of $A$ in $\gorb{i}$.
\end{theorem}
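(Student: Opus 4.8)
The plan is to reduce the statement to the independence of the entire intersection profile of $A$ against $\gorb{j}$, and to reach that profile through a flag-counting quantity on which both the defining property (3) and the Lemma can be brought to bear. Fix the two blocs $\gorb{i}$ and $\gorb{j}$, and for $A\in\gorb{i}$ set $n_r(A)=\#\{B\in\gorb{j} : \vert A\cap B\vert = r\}$. The count in the statement (members of $\gorb{j}$ meeting a prescribed size-$r$ subset of $A$, summed over such subsets) is a fixed linear combination of the numbers $n_r(A)$, with coefficients depending only on $\vert A\vert$, which is constant on $\gorb{i}$; so it suffices to show that each $n_r(A)$ is independent of the choice of $A$.

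The engine of the proof is the auxiliary quantity $g_r(A)=\sum_{B\in\gorb{j}}\binom{\vert A\cap B\vert}{r}$. First I would reinterpret it as a count of flags: $g_r(A)$ is the number of pairs $(T,B)$ with $B\in\gorb{j}$, $T\subseteq A\cap B$ and $\vert T\vert = r$; equivalently, $T$ is a size-$r$ subset of $A$ contained in $B$. The point of this reformulation is that it splits naturally into a ``downward'' part and an ``upward'' part. Grouping the size-$r$ subsets $T\subseteq A$ according to the bloc $\gorb{k}$ to which they belong, property (3) gives that exactly $\binom{\gorb{i}}{\gorb{k}}$ of them lie in $\gorb{k}$, independently of $A$; and the Lemma gives that every such $T\in\gorb{k}$ is contained in exactly $\binom{\gorb{c(k)}}{\gorb{c(j)}}$ members of $\gorb{j}$, independently of which $T$ is chosen in $\gorb{k}$.

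Combining these two facts yields $g_r(A)=\sum_{k}\binom{\gorb{i}}{\gorb{k}}\binom{\gorb{c(k)}}{\gorb{c(j)}}$, the sum ranging over those blocs $\gorb{k}$ consisting of $r$-element sets, an expression manifestly free of $A$. It remains to pass from the $g_r(A)$ back to the $n_m(A)$. Since $g_r(A)=\sum_{m\ge r}\binom{m}{r}\,n_m(A)$, the linear system relating the vectors $(g_r(A))_r$ and $(n_m(A))_m$ has matrix $\bigl(\binom{m}{r}\bigr)_{r\le m}$, which is unitriangular and hence invertible over $\mathbb{Z}$; binomial inversion gives $n_m(A)=\sum_{r\ge m}(-1)^{r-m}\binom{r}{m}\,g_r(A)$. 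As each $g_r(A)$ is independent of $A$, so is each $n_m(A)$, and the theorem follows.

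The main obstacle is really the choice of the right quantity to count, namely the flag reformulation of $g_r(A)$: it is precisely what allows property (3) to control the downward incidences and the Lemma to control the upward incidences at the same time. Once that reformulation is in hand the bloc-grouping is immediate and the binomial inversion is routine; the only points demanding care are checking that the grouping of size-$r$ subsets by bloc exhausts them without overlap and that the inversion is legitimate, i.e. that the triangular binomial matrix is invertible.
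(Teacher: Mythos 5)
Your proof is correct, but it reaches the theorem by a more elementary route than the paper does. The paper first passes from the strongly regular partition to the associated generalised orbit algebra (the span of the polynomials $p_{\gorb{i}}$, which is closed under $\partial$ and $\complement$), then proves the much stronger statement that $\partial$ and $\complement$ generate the whole Terwilliger algebra $\firstcom$, so that the algebra is closed under every operator $E_{k,l,r}$; the theorem falls out by reading off the coefficients of $E_{k,l,r}(p_{\gorb{j}})$. Your argument isolates exactly the combinatorial kernel of that operator computation: your flag count $g_r(A)=\sum_{B\in\gorb{j}}\binom{\vert A\cap B\vert}{r}$ is precisely the evaluation on $p_A$ of the paper's identity $\complement\circ\partial^{v-r}\circ\complement\circ\partial^{u-r}\circ id_u=(u-r)!(v-r)!\sum_{w\geq r}\binom{w}{r}E_{u,v,w}$, with property (3) handling the downward step $\partial^{u-r}$ and the Lemma handling the upward step $\complement\circ\partial^{v-r}\circ\complement$, and your binomial inversion is the paper's ``triangular linear system.'' What you lose is generality: the paper's induction (building the projections $id_k$ and the operators $E_{k,k,t}$) establishes that the partition's algebra is closed under the entire commutant $\firstcom$, which is reused later (injectivity/surjectivity of $\partial$, Livingstone--Wagner, the Lov\'asz and M\"uller arguments). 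What you gain is a short, self-contained double count that needs only properties (2)--(3) and the Lemma, with no algebra structure at all. The only points worth making explicit in a final write-up are the ones you already flag: that the size-$r$ subsets of $A$ are partitioned by the blocs of $r$-sets (immediate from properties (1) and the fact that the blocs partition $\allsets$), and that the statement's count is recovered from the $n_m(A)$ (indeed it \emph{is} $n_r(A)$ under the paper's reading, as the formula for $E_{k,l,r}(p_{\gorb{i}})$ shows).
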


\begin{theorem}
If $P$ is a strongly regular partition, then for any three blocs $\gorb{i}$, $\gorb{j}$, and $\gorb{k}$ of $P$, and any integers $r_1,\ldots,r_4$~: 
\[ \left\vert \left\{ (A,B)\in \gorb{i}\times \gorb{j}\ s.t.\ 
\left\{
\begin{array}{l} \vert A\cap B\vert=r_1\\ 
\vert A\cap C\vert=r_2\\
\vert B\cap C\vert=r_3\\
\vert A\cap B\cap C\vert=r_4
\end{array}\right.
\right\}\right\vert\]
is independent of the choice of $C$ in $\gorb{k}$.
\end{theorem}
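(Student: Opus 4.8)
The plan is to extend the M\"obius (binomial) inversion argument underlying Theorem 1 from two sets to the present three–set situation, reducing the ``exact'' intersection data $(r_1,r_2,r_3,r_4)$ to purely monotone ``at least'' data that can be fed into Lemma 1. Fix $C\in\gorb{k}$ and write $N_C(r_1,r_2,r_3,r_4)$ for the quantity to be shown constant. For nonnegative integers $p,q,\rho,\sigma$ I would introduce the upper count $V_C(p,q,\rho,\sigma)=\#\{(A,B,P,Q,R,S):A\in\gorb{i},\,B\in\gorb{j}\}$, where $P\subseteq A\cap C$, $Q\subseteq B\cap C$, $R\subseteq A\cap B\cap C$ and $S\subseteq A\cap B$ are chosen with $|P|=p,|Q|=q,|R|=\rho,|S|=\sigma$ and no further constraints. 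Counting, for each fixed pair $(A,B)$, the admissible probe tuples gives
\[ V_C(p,q,\rho,\sigma)=\sum_{r_1,r_2,r_3,r_4}\binom{r_1}{\sigma}\binom{r_2}{p}\binom{r_3}{q}\binom{r_4}{\rho}\,N_C(r_1,r_2,r_3,r_4). \]
The matrix of this transform is a tensor product of four unitriangular binomial matrices, hence invertible, and its entries do not involve $C$; so it suffices to prove that every $V_C(p,q,\rho,\sigma)$ is independent of $C$.

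Next I would evaluate $V_C$ by summing over the probe sets first. For fixed $P,Q,R\subseteq C$ and $S\subseteq\groundset$ the conditions on $A$ and on $B$ decouple: they read $A\supseteq P\cup R\cup S$ and $B\supseteq Q\cup R\cup S$, with no link between $A$ and $B$. Thus the inner count factors as the number of $A\in\gorb{i}$ containing $P\cup R\cup S$ times the number of $B\in\gorb{j}$ containing $Q\cup R\cup S$, and by Lemma 1 each factor depends only on the bloc of the set it contains. Grouping the probes by the blocs $\ell=\mathrm{bloc}(P\cup R\cup S)$ and $m=\mathrm{bloc}(Q\cup R\cup S)$ yields
\[ V_C(p,q,\rho,\sigma)=\sum_{\ell,m}\binom{\gorb{c(\ell)}}{\gorb{c(i)}}\binom{\gorb{c(m)}}{\gorb{c(j)}}\,W_C(\ell,m), \]
where $W_C(\ell,m)$ is the number of probe tuples of the prescribed sizes with $P\cup R\cup S\in\gorb{\ell}$ and $Q\cup R\cup S\in\gorb{m}$. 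Since the Lemma 1 coefficients are manifestly independent of $C$, the whole matter is reduced to showing that each $W_C(\ell,m)$ is independent of $C$.

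This last point is the crux, and the main obstacle. The two sets $Y_1=P\cup R\cup S$ and $Y_2=Q\cup R\cup S$ share the common part $R\cup S$, and in particular satisfy $Y_1\setminus C=Y_2\setminus C$; stripping off a further $C$-independent multiplicity (the number of ways to recover $(P,Q,R,S)$ from $(Y_1,Y_2)$, which depends only on the sizes $|Y_1\cap C|,|Y_2\cap C|,|Y_1\cap Y_2\cap C|$) shows that $W_C$ is governed by the number of pairs $(Y_1,Y_2)\in\gorb{\ell}\times\gorb{m}$ that \emph{agree outside} $C$ and have a prescribed trace on $C$. This is a constrained instance of the very statement being proved, so it cannot be disposed of by a single appeal to the third defining property or to Lemma 1: the coupling created by the shared part $R\cup S$ is precisely what blocks a one–step evaluation. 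I would resolve it by induction — most naturally a simultaneous induction on the sizes of the sets involved that also carries the ``dual'' count of pairs of subsets \emph{contained in} $C$, which is tied to $W_C$ through complementation (the second defining property together with Lemma 1); the base cases are supplied by the third defining property, Lemma 1 and Theorem 1, and each inductive step peels the shared part off $C$. In the algebraic language announced in the introduction this coupling is exactly what is absorbed by the closure of the generalised orbit algebra under products, so an alternative and probably cleaner route is to realise $N_C$ as a matrix entry of a product of three incidence operators and to invoke that closure directly.
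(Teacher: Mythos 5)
Your reduction steps are sound as far as they go: the binomial inversion from exact to ``at least'' intersection data is invertible with $C$-free coefficients, and the decoupling of the conditions on $A$ and $B$ for a fixed probe quadruple $(P,Q,R,S)$, followed by an appeal to Lemma~1, correctly reduces everything to the quantity you call $W_C(\ell,m)$. But at that point the argument stops rather than concludes. As you yourself observe, $W_C(\ell,m)$ amounts to counting pairs $(Y_1,Y_2)\in\gorb{\ell}\times\gorb{m}$ with a prescribed mutual intersection pattern with $C$ \emph{and} the extra coupling $Y_1\setminus C=Y_2\setminus C$; this is a statement of exactly the same type as the theorem (indeed harder, because of the coupling), so the reduction is circular unless the remaining step is actually proved. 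The ``simultaneous induction that peels the shared part off $C$'' is not specified: there is no identified induction parameter, no verification that the inductive step strictly decreases it, and no argument that the base cases (property~3, Lemma~1, Theorem~1) cover what is left. In its present form the proposal is a plausible plan with the essential difficulty deferred, not a proof.

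Your closing sentence points at the route the paper actually takes, and it is worth seeing why that route is not a one-line invocation of ``closure under products.'' The paper realises the count as the coefficient of $E_\mu\in\com{2}$ applied to $p_{\gorb{i}}\otimes p_{\gorb{j}}$, and must therefore show that the operators $u\circ m\circ(v\otimes w)$ with $u,v,w\in\firstcom$ (which do preserve the algebra, by the Terwilliger generation theorem together with closure under multiplication) span all of $\com{2}$. That spanning statement is the real content: it is proved by expressing $E_{k,a_3,r_3}\circ h\circ\left(E_{a_1,k,r_1}\otimes E_{a_2,k,r_2}\right)$ on the basis $\left\{E_\mu\right\}$, deriving the system of inequalities
\[
\left[\begin{array}{cccc} 1&1&0&-1\\0&1&1&-1\\1&0&1&-1\\1&1&1&-1\end{array}\right]
\left[\begin{array}{c}r_1\\r_2\\r_3\\k\end{array}\right]\leq
\left[\begin{array}{cccc} 1&0&0&0\\0&1&0&0\\0&0&1&0\\1&1&1&-1\end{array}\right]
\left[\begin{array}{c}\mu\left(\{1,2\}\right)\\ \mu\left(\{1,3\}\right)\\ \mu\left(\{2,3\}\right)\\ \mu\left(\{1,2,3\}\right)\end{array}\right],
\]
which makes the change-of-basis system triangular, and exhibiting the witness $U=\left(A_1\cap A_2\right)\cup\left(A_2\cap A_3\right)\cup\left(A_1\cap A_3\right)$ to show the diagonal entries are nonzero, hence the system invertible. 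None of this appears in your proposal, and it is precisely the ingredient that replaces your unresolved induction. To complete your write-up you would either have to supply that triangularity argument (or an equivalent one), or give the missing induction in full detail.
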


One could have expected that these new results, and the framework that comes along would yield an improvement on the Edge Reconstruction Conjecture. A discussion of this problem is provided, including some previously unknown limit cases that establish the impossibility of improving the result of V.M\"uller in the general case of reconstruction under group action.

We choose the algebraic point of view for our exposition, and instead of the pair orbits-blocs, we study orbit algebras and the generalised orbit algebras corresponding to strongly regular partitions. This emphasize the role of two operators : derivation and complementation, which are shown to generate the Terwilliger algebra. This shows a link between the Terwilliger algebra and the reconstruction conjectures that was never emphasized to this degree.

\section{Generalised orbit algebras}

\subsection{Orbit algebras}

\begin{definition} We consider $\dign$ the quotient algebra of the polynomial algebra $\field[x_1,\ldots,x_n]$ by the ideal of polynomials  generated by $x_1^2-x_1,x_2^2-x_2,\ldots,x_n^2-x_n$.
\end{definition}

To emphasize the product of $\dign$, we sometimes write $p\cdot q$ instead of $pq$. 
It is clear that $\dign$ is a real vector space of dimension $2^n$, and that the polynomials 

\[p_A=\prod_{i\in A} x_i,\;\mathrm{ with }\; A\in\mathcal{P}\left(\groundset\right)\]

are a basis of $\dign$. We observe that~:

\begin{itemize}
\item $p_{\emptyset}=1$
\item $\forall \left(A,B\right) \in \allsets^2,\quad p_A\cdot p_B=p_{A \cup B}$
\end{itemize}

For every $n$, the polynomial algebra $\dign$ is also an algebra of functions on $\allsets$ (with pointwise multiplication)~:

\begin{definition} If $A$ and $B$ are two subsets of $\groundset$, we define the value of $p_A$ at the set $B$ to be~:
\[p_A\left(B\right)=\left\{\begin{array}{l}1\ \textrm{if}\ A\subseteq B\\0 \ \textrm{otherwise} \end{array}\right.\]
\end{definition}

As the $p_A$, $A\in\allsets$, form a basis of $\dign$, we can extend the above evaluation to every polynomial of $\dign$ by linearity. We call the function which maps a polynomial of $\dign$ to its associated real function on $\allsets$ \emph{evaluation}. It is an algebra isomorphism, as~:

\begin{itemize}
\item For every triple $\left(A,B,C\right)$ of subsets of $\groundset$, $A\cup B\subseteq C$ if and only if $A\subseteq C$ and $B\subseteq C$, hence $p_A\cdot p_B\left( C\right)=p_{A\cup B}\left(C\right)=p_A\left(C\right)p_B\left(C\right)$.
\item If $p=\sum_{A\subseteq\groundset}\alpha_A p_A$ is such that for every subset $B$ of $\groundset$ $p\left(B\right)=0$, then $p$ is identically $0$. To prove this, we consider a subset $C$ of $\groundset$ such that $\alpha_C\neq 0$ and $\vert C\vert$ is maximal for this property. Then we would have $p\left(C\right)=0$ and $p\left(C\right)=\alpha_C$.
\end{itemize}

This isomorphism indicates that Lagrange interpolation can be used, and we shall use it to deduce our first structure theorem concerning the subalgebras of $\dign$~:

\begin{theorem}\label{partition} For every subalgebra $\mathcal{A}$ of $\dign$ that contains $1$, there exists a partition $\bl{P}$ of $\allsets$ such that~:
\[\mathcal{A}=\left\{p\ \in \mathcal{S}_n\quad s.t.\quad \forall \bl{P}_i \in \bl{P}\quad \forall (A,B)\in \bl{P}_i^2 \quad p(A)=p(B)   \right\}\]
\end{theorem}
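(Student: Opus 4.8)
The plan is to use the evaluation isomorphism established above to regard $\dign$ as the algebra of real functions on the finite set $\allsets$ with pointwise operations; under this identification the statement becomes the classical correspondence between unital subalgebras of a function algebra on a finite set and partitions of that set. I would build the partition directly from $\mathcal{A}$: declare two subsets $A,B\in\allsets$ equivalent when $p(A)=p(B)$ for every $p\in\mathcal{A}$. This is plainly an equivalence relation, and I let $\bl{P}$ be the partition of $\allsets$ into its classes. With this definition the inclusion $\mathcal{A}\subseteq\{p : \forall \bl{P}_i,\ \forall (A,B)\in\bl{P}_i^2,\ p(A)=p(B)\}$ is immediate, since every element of $\mathcal{A}$ is constant on each class by construction.

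The substance of the argument is the reverse inclusion, for which I would exhibit the indicator function of each block as an element of $\mathcal{A}$ and then take linear combinations. Fix a block $\bl{P}_i$. For every other block $\bl{P}_j$ the classes are distinct, so by the defining property of the equivalence relation there is some $p_{ij}\in\mathcal{A}$ separating them; write $a_{ij}$ and $b_{ij}$ for its (constant) values on $\bl{P}_i$ and $\bl{P}_j$ respectively, so $a_{ij}\neq b_{ij}$. Because $\mathcal{A}$ contains $1$ and is closed under scalar combinations, the function
\[
e_i=\prod_{j\neq i}\frac{p_{ij}-b_{ij}}{a_{ij}-b_{ij}}
\]
lies in $\mathcal{A}$: each factor is an affine combination of $p_{ij}$ and $1$, and the product stays in $\mathcal{A}$ because $\mathcal{A}$ is closed under multiplication. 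A direct evaluation shows $e_i$ takes the value $1$ on $\bl{P}_i$ (every factor is $1$ there) and $0$ on every other block $\bl{P}_k$ (the factor indexed by $j=k$ vanishes), so $e_i$ is exactly the indicator of $\bl{P}_i$. This is precisely the Lagrange-interpolation idea signalled earlier. Given any $p$ that is constant on each block, say with value $\lambda_i$ on $\bl{P}_i$, I then have $p=\sum_i \lambda_i e_i\in\mathcal{A}$, which gives the reverse inclusion and hence the theorem.

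I expect the only real obstacle to be this construction of the block indicators, and in particular making visible where each hypothesis is used: closure under multiplication is what lets the product $e_i$ remain in $\mathcal{A}$ (a mere separating subspace containing $1$ would not suffice), containment of $1$ supplies all constants for the affine combinations, and finiteness of $\allsets$ guarantees that $\bl{P}$ has finitely many blocks so the product defining $e_i$ is finite. Once the indicators are in hand the rest is bookkeeping. A minor point to check carefully is that the values $a_{ij},b_{ij}$ are well defined, i.e. that the separating $p_{ij}$ is genuinely constant on each class; this is automatic since every element of $\mathcal{A}$ is constant on classes, so separating one representative of $\bl{P}_j$ from one of $\bl{P}_i$ separates the whole blocks.
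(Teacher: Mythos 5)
Your proof is correct and follows essentially the same route as the paper: both arguments define the partition by the relation $A\sim B \Leftrightarrow p(A)=p(B)$ for all $p\in\mathcal{A}$ and then use Lagrange-type interpolation, exploiting closure under multiplication and the presence of $1$, to realise each block indicator inside $\mathcal{A}$ and expand block-constant functions in terms of them. The only cosmetic difference is that the paper first builds idempotents for the level sets of a single polynomial and multiplies those, whereas you multiply one normalised separating affine factor per pair of blocks; the two constructions are interchangeable.
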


\begin{proof}
For every polynomial $p$, we define an equivalence relation on $\allsets$ by~: \[A \mathcal{R}_p B \Leftrightarrow p(A)=p(B)\]
This relation partitions $\allsets$ into generalised orbits $\bl{P}_1,\ldots,\bl{P}_s$, corresponding to distinct values of $p$. We put 
\[r_i=\frac{\prod_{T \notin \bl{P}_i}\left(p-p(T)\right)}{\prod_{T \notin \bl{P}_i}\left(p(A)-p(T)\right)} \quad \textrm{for some}\ A\in \bl{P}_i\]
and observe that $r_i\left(B\right)$ is $1$ if $B$ is a member of $\bl{P}_i$ and $0$ otherwise. Also, each $r_i$ is in $\mathcal{A}$. If we consider the total relation, that is  
\[A \mathcal{R} B \Leftrightarrow p(A)=p(B),\; \forall p \in \mathcal{A}\]
then $\mathcal{R}$ partitions $\allsets$ into generalised orbits $\bl{Q}_1,\ldots,\bl{Q}_s$, and we see that 
\[\mathcal{A}\subseteq\left\{p\ \in \mathcal{S}_n\quad s.t.\quad \forall i=1\ldots s,\ \forall (A,B)\in \bl{Q}_i^2 \quad p(A)=p(B)   \right\}\]
As for the reverse inclusion, there exists a polynomial $\varepsilon_i$ in $\mathcal{A}$ such that $\varepsilon_i\left(A\right)$ is $1$ if $A$ is a member of $\bl{Q}_i$ and $0$ otherwise~: we consider the product of some of the $r_i$ defined above.
The isomorphism between $\dign$ and real functions on $\allsets$ allows us to conclude that any polynomial that takes constant values on each of the $\bl{Q}_i$ is a linear combination of the $\varepsilon_i$, that is to say, is in $\mathcal{A}$.
\end{proof}

We observe that, conversely, any partition $\bl{P}$ of $\allsets$ uniquely defines a subalgebra of $\dign$ containing $1$, namely~:
\[\left\{p\ \in \mathcal{S}_n\quad s.t.\quad \forall \bl{P}_i \in \bl{P}\quad \forall (A,B)\in \bl{P}_i^2 \quad p(A)=p(B)   \right\}\]

and deduce that there is a finite number of subalgebras of $\dign$~: exactly as many as there are partitions of $\allsets$.

The group of permutations $\perms$ of the elements of $\groundset$ acts on $\dign$ as a group of algebra isomorphisms defined by~:
\[\sigma\cdot x_i= x_{\sigma(i)}\;,\;\sigma\ \in \ \perms\]

\begin{definition}
For every subgroup $\Gamma$ of $\perms$, the set
\[\dign^\Gamma=\left\{p \in \dign\quad s.t.\quad \sigma\cdot p = p\quad\forall \sigma\in \Gamma \right\}\]
is a subalgebra of $\dign$ called the \emph{orbit algebra} of $\Gamma$. Its elements are called the \emph{invariants} of $\Gamma$. If an algebra $\mathcal{A}$ is equal to $\dign^\Gamma$ for some subgroup $\Gamma$ of $\perms$, we will say that $\mathcal{A}$ is an \emph{orbit algebra}.
\end{definition}

The above considerations yield a natural enumeration of the subalgebras of $\dign$ as a bijection with the partitions of $\allsets$, and we would like to obtain a combinatorial characterization of subalgebras of $\dign$ that are orbit algebras, or equivalently, of partitions of $\allsets$ that are the orbits of a subgroup $\Gamma$ of $\perms$.

To conclude with orbit algebras, we remark that two distinct groups do not necessarily yield the same orbit algebra but it is nevertheless almost always true for primitive permutation groups, according to a theorem of \cite{cameron2}. 

\subsection{Generalised orbit algebras}

\begin{definition}
\begin{itemize}
\item We call \emph{derivation} the linear mapping $\partial$ of $\dign$ to itself defined by~:
\[\partial\left( p_A\right) = \sum_{i\in A} p_{A\setminus i},\quad \forall A\subseteq \groundset\]
\item We call \emph{complementation} the linear mapping $\complement$ of $\dign$ to itself defined by~:
\[\complement\left( p_A\right) = p_{\groundset\setminus A},\quad \forall A\subseteq \groundset\]
\end{itemize}
\end{definition}

We have for example that $\partial(p_{\emptyset})=\partial(1)=0$, and that $\complement(p_{\emptyset})=\complement(1)=p_{\groundset}$. Also, if $n\geq 2$, $\partial(p_{\{1,2\}})=p_{\{1\}}+p_{\{2\}}$.

It is clear that complementation is an involution, that is to say $\complement\circ\complement$ is the identity of $\dign$. As for $\partial$, one can show that for any integer $k$ and any subset $A$ of $\groundset$, 
\begin{eqnarray}\label{derivpow}
\partial^k(p_A)&=&\sum_{\substack{B \subseteq A\\ \vert B \vert =\vert A\vert -k}} k!p_{B} 
\end{eqnarray}
Hence, $\partial$ is nilpotent of order $n+1$.

One can show that for every permutation $\sigma$ in $\perms$, and every pair $p,q$ of polynomials in $\dign$, one has~:
\begin{itemize}
\item $\sigma\left(p\cdot q\right)=\sigma\left(p\right)\cdot\sigma\left(q\right)$
\item $\sigma\circ\partial\left(p\right)=\partial\circ\sigma\left(p\right)$
\item $\sigma\circ\complement\left(p\right)=\complement\circ\sigma\left(p\right)$
\end{itemize}

We deduce that any orbit algebra is closed under derivation, complementation and multiplication. This is underlying numerous results about orbit algebras and orbits on subsets as the Livingstone-Wagner theorem \cite{livingstone} : the number of orbits on $k+1$-subsets is at least the number of orbits on $k$-subsets if $k<\frac{n}{2}$. Also, the orbits on $k$-subsets are determined independently of the group by the orbits on $k+1$-subsets in this case, that is to say the restriction of $\partial$ to the orbits of size $k+1$ is surjective if $k<\frac{n}{2}$, and also injective if $k\geq\frac{n}{2}$. A simple proof is provided below. 

For instance, if $n=3$, one can consider the real vector space on  \[\left\{1,\ x_1+x_2,\ x_3,\ x_1x_2,\ x_1x_3+x_2x_3,\ x_1x_2x_3\right\}\] which is the orbit algebra of 
the subgroup $\left\{\textrm{Id},(1 2)(3)\right\}$. It is closed under multiplication, and we have~:
\[\begin{array}{rclrcl}

\partial \left(1\right)&=&0& \complement  \left(1\right)&=&x_1x_2x_3\\
\partial \left(x_1+x_2\right)&=&2&\complement \left(x_1+x_2\right)&=&x_1x_3+x_2x_3\\
\partial \left(x_3\right)&=&1&\complement  \left(x_3\right)&=&x_1x_2\\
\partial \left(x_1x_2\right)&=&x_1+x_2&\complement \left(x_1x_2\right)&=&x_3\\
\partial \left(x_1x_3+x_2x_3\right)&=&x_1+x_2+2x_3&\complement \left(x_1x_3+x_2x_3\right)&=&x_1+x_2\\
\partial \left(x_1x_2x_3\right)&=&x_1x_2+x_1x_3+x_2x_3&\complement \left(x_1x_2x_3\right)&=&1\\

\end{array}\]

It has been a long standing question for the author to find out whether there are subspaces of $\dign$ closed under derivation, complementation and multiplication that are not the orbit algebras of a group of permutations. Computer aided enumeration indicates that they do not exist with $n\leq 6$. This question motivated the following developments.

\begin{definition}
A \emph{generalised orbit algebra} is a nonempty nonzero subalgebra of $\dign$ that is closed under derivation and complementation. To emphasize the parameter $n$, we might say that a generalised orbit algebra has \emph{order} $n$.
\end{definition}

As generalised orbit algebras are particular subalgebras of $\dign$ (we will see that they always contain $1$), one might consider their associated partition defined by Theorem \ref{partition}. That is to say, given a generalised orbit algebra $\mathcal{D}$, the subsets of the set $\allsets$ on which every polynomial of $\mathcal{D}$ take the same value. For the above example, one can see that the generalised orbits of the associated partition are~: 
\begin{center}
\begin{tabular}[width=\textwidth]{cccccc}
$\emptyset$ & $\{1\},\{2\}$ &$\{3\}$& 
$\{1,2\}$& $\{1,3\},\{2,3\}$&
 $\{1,2,3\}$\\
\end{tabular}
\end{center}
It is a remarkable fact (for a subalgebra of $\dign$), that there is a natural bijection between the generalised orbits of this partition and the polynomial basis we took for this particular algebra. This is indeed true of any orbit algebra\footnote{i.e. the orbits of a group $\Gamma$ on $\allsets$ index a basis of the orbit algebra of $\Gamma$, and form the partition associated with $\dign^{\Gamma}$ by Theorem \ref{partition}.}, and more generally, of any generalised orbit algebra. In the following, we develop a formal algebraic machinery which can be applied to establish this fact (although generalised orbit algebras are not the only algebras satisfying this property).

\begin{definition} We denote by $l$ the linear mapping of $\dign$ to itself defined by~:
\[l(p)=\sum_{k=0}^{n}\frac{\partial^k (p) }{k!} \]
\end{definition}
Observing the equation (\ref{derivpow}), one can see that $\forall A\subseteq \groundset$,
\[l(p_A)=\sum_{B\subseteq A} p_B\]

The powers of $l$ have a neat expression in terms of $\partial$~:

\begin{lemma}[Mnukhin \cite{mnukhin2}]
 \label{lpuiss} For every nonzero integer $m$~:
\[l^m=\sum_{k=0}^{n} \frac{m^k}{k!}\partial^k \]
\end{lemma}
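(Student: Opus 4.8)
The plan is to recognise $l$ as a truncated exponential of the nilpotent operator $\partial$ and to exploit the functional equation of the exponential. Since $\partial^{n+1}=0$ by equation (\ref{derivpow}), the subalgebra of $\mathrm{End}(\dign)$ generated by $\partial$ is commutative, and every element is a polynomial in $\partial$ of degree at most $n$; concretely it is a homomorphic image of the power series ring $\field[[t]]$ under the evaluation $t\mapsto\partial$, whose kernel contains $t^{n+1}$. Under this homomorphism the formal exponential $\exp(t)=\sum_{k\geq 0}t^k/k!$ maps precisely to $l$, because all terms of degree exceeding $n$ are annihilated by nilpotency. The target formula is then nothing but the image of the identity $\exp(t)^m=\exp(mt)$ after truncation.

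First I would treat positive $m$ by induction, the case $m=1$ being the definition of $l$. For the inductive step I would compute
\[
l^{m+1}=l\cdot l^m=\Big(\sum_{j=0}^{n}\frac{\partial^j}{j!}\Big)\Big(\sum_{k=0}^{n}\frac{m^k}{k!}\partial^k\Big)=\sum_{i\geq 0}\frac{\partial^i}{i!}\sum_{j+k=i}\binom{i}{j}m^k,
\]
using $\frac{1}{j!\,k!}=\frac{1}{i!}\binom{i}{j}$ when $j+k=i$. I would recognise the inner sum as $(1+m)^i$ by the binomial theorem, then discard the terms with $i>n$ via $\partial^{n+1}=0$, leaving $\sum_{i=0}^{n}\frac{(m+1)^i}{i!}\partial^i$, exactly the asserted expression for $m+1$.

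To reach the negative integers I would observe that the power series identity $\exp(t)^m=\exp(mt)$, valid in $\field[[t]]$ for every integer $m$ because $\exp$ is a homomorphism from the additive group of the maximal ideal to the group of units, descends through the evaluation $t\mapsto\partial$. Since $\partial$ is nilpotent, $l=\mathrm{Id}+(l-\mathrm{Id})$ is unipotent (the operator $l-\mathrm{Id}=\sum_{k\geq 1}\partial^k/k!$ is a polynomial in $\partial$ with zero constant term, hence nilpotent), so $l$ is invertible with $l^{-1}$ the image of $\exp(-t)$. The evaluation being a ring homomorphism that carries units to units respects arbitrary integer powers, whence the image of $\exp(mt)=\sum_{k=0}^{n}m^k t^k/k!$, truncated again by nilpotency, equals $l^m$ for every nonzero $m$.

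The only delicate point is the passage to negative exponents: one must ensure that $l$ is genuinely invertible and that the formal functional equation survives truncation. Both follow immediately from $\partial^{n+1}=0$, so no real obstacle remains. The positive-integer induction alone already determines $l^{-1}$ and could be propagated downward, but the exponential viewpoint makes the uniform statement for all nonzero $m$ most transparent, and I would present it in that form.
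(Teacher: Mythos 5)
Your proof is correct and follows essentially the same route as the paper: the paper likewise composes two truncated exponentials $\sum_k \frac{r^k}{k!}\partial^k$ and $\sum_i \frac{s^i}{i!}\partial^i$, collapses the convolution via the binomial theorem and the nilpotency $\partial^{n+1}=0$ to obtain $\sum_m \frac{(r+s)^m}{m!}\partial^m$, then settles positive $m$ by induction and negative $m$ by taking $r=-1$, $s=1$ to exhibit $l^{-1}$ explicitly. Your formal-power-series packaging of this computation as the image of $\exp(t)^m=\exp(mt)$ under the truncating evaluation $t\mapsto\partial$ is a more conceptual dressing of the identical argument.
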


\begin{proof}
If $r$ and $s$ are nonzero integers, one has~:
\begin{eqnarray*}
\left(\sum_{k=0}^{n} \frac{r^k}{k!}\partial^k\right) \circ \left(\sum_{i=0}^{n} \frac{s^i}{i!}\partial^i \right)& = &
\sum_{k=0}^{n}\sum_{i=0}^{n}  \frac{r^k s^i}{k! i!}\partial^{k+i} \\
&=& \sum_{m=0}^{2n}\partial^{m} \sum_{k=0}^{m} \frac{r^k s^{m-k}}{k! (m-k)!}\\
&=& \sum_{m=0}^{n}\frac{\partial^{m}}{m!}\sum_{k=0}^{m}\frac{m!}{k! (m-k)!} r^k s^{m-k}\\
&=& \sum_{m=0}^{n}\frac{(r+s)^m}{m!}\partial^{m}
\end{eqnarray*}

As the Lemma is true if $m=1$, the above equality provides a proof of the property for $m>0$, by induction. One can use the same computations with $r=-1$ and $s=1$ to show that~: 
\[\left(\sum_{k=0}^{n} \frac{(-1)^k}{k!}\partial^k\right) \circ l = Id\]
The Lemma is then true if $m=-1$, and thus if $m<0$ by induction.
\end{proof}

This Lemma implies that $l$ is an isomorphism, and by using a VanderMonde matrix, we see that there exist rational numbers $a_1,\ldots,a_{n+1}$ such that~:
\begin{eqnarray}\label{dwithl}
\partial=\sum_{r=1}^{n+1} a_r l^r
\end{eqnarray}

\begin{definition} We denote by $\varepsilon$ the linear mapping of $\dign$ to itself defined by~: 
\[\varepsilon(p)=\complement \circ l^{-1} \circ \complement (p)\]
Moreover, we shall use the simplified notation $\varepsilon_A=\varepsilon\left(p_A\right)$ for any subset $A$ of $\groundset$.
\end{definition}

The mapping $\varepsilon$ is an isomorphism, hence $\left\{\varepsilon_A\ :\ A\subseteq\groundset\right\}$ is a basis of $\dign$. By equation (\ref{derivpow}), and Lemma \ref{lpuiss} we have~:
\begin{equation}
\label{formuleeps}\varepsilon_A=\sum_{B\supseteq A} (-1)^{\vert B\vert -\vert A\vert}p_B
\end{equation}

\begin{lemma}
\label{evalepsilon} $\varepsilon_A(B)=1$ if $A=B$ and $0$ otherwise .
\end{lemma}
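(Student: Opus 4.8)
The plan is to compute $\varepsilon_A(B)$ directly from the explicit formula (\ref{formuleeps}) together with the evaluation rule for the basis elements $p_C$. To avoid clashing with the evaluation point $B$ in the statement, I would rename the summation index in (\ref{formuleeps}) to $C$, so that $\varepsilon_A=\sum_{C\supseteq A}(-1)^{\vert C\vert-\vert A\vert}p_C$. First I would apply evaluation at an arbitrary subset $B$ of $\groundset$, using linearity together with the fact that $p_C(B)=1$ exactly when $C\subseteq B$ and $0$ otherwise. This collapses the sum to those $C$ satisfying $A\subseteq C\subseteq B$, giving
\[\varepsilon_A(B)=\sum_{A\subseteq C\subseteq B}(-1)^{\vert C\vert-\vert A\vert}.\]

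Next I would split into cases according to whether $A\subseteq B$. If $A\not\subseteq B$, there is no $C$ with $A\subseteq C\subseteq B$, the sum is empty, and $\varepsilon_A(B)=0$; this disposes of every $B$ failing to contain $A$. If $A\subseteq B$, I would reparametrise the interval $[A,B]$ of the subset lattice by writing $D=B\setminus A$: every $C$ with $A\subseteq C\subseteq B$ is uniquely of the form $C=A\cup S$ with $S\subseteq D$, and $\vert C\vert-\vert A\vert=\vert S\vert$. The sum then becomes the standard alternating subset sum
\[\varepsilon_A(B)=\sum_{S\subseteq D}(-1)^{\vert S\vert}=\sum_{k=0}^{\vert D\vert}\binom{\vert D\vert}{k}(-1)^k=(1-1)^{\vert D\vert}.\]

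Finally I would read off the conclusion: $(1-1)^{\vert D\vert}$ equals $1$ when $\vert D\vert=0$ and $0$ otherwise, so (using the convention $0^0=1$) it is $1$ precisely when $B\setminus A=\emptyset$, which combined with $A\subseteq B$ means $A=B$, and $0$ in all remaining cases. Together with the earlier case $A\not\subseteq B$, this gives $\varepsilon_A(B)=1$ if and only if $A=B$. I do not expect any genuine obstacle here; the only points requiring care are the bookkeeping of which $C$ survive the evaluation and the boundary convention $0^0=1$ that makes the case $A=B$ come out correctly. An alternative route, avoiding even formula (\ref{formuleeps}), would be to observe that evaluation turns $l$ into the zeta transform of the subset lattice, so that $\varepsilon$ becomes a complemented conjugate of its Möbius inverse and the Kronecker-delta behaviour is just Möbius inversion; but the direct computation above is shorter and self-contained.
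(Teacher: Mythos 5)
Your proof is correct and follows essentially the same route as the paper: substitute the explicit formula for $\varepsilon_A$, evaluate term by term to reduce to the alternating sum over the interval $A\subseteq C\subseteq B$, and recognise it as $(1-1)^{\vert B\setminus A\vert}$. The paper's version simply asserts the value of that alternating sum, whereas you spell out the binomial computation; the content is the same.
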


\begin{proof}
\begin{eqnarray*}
\varepsilon_A(B)&=&\sum_{C\supseteq A} (-1)^{\vert C\vert - \vert A\vert}p_C(B)\\
&=&(-1)^{\vert A\vert} \sum_{C\ s.t.\ A\subseteq C\subseteq B} (-1)^{\vert C\vert}\\
&=&\left\{\begin{array}{c} 1\ if\ A=B \\0\ if\ A \neq B\end{array}\right.\\
\end{eqnarray*}
\end{proof}

Due to the existence of the isomorphism between $\dign$ and the real functions on $\allsets$, one can see that the $\varepsilon_A$ form a basis of idempotents of $\dign$, that is $\varepsilon_A\cdot\varepsilon_B=\varepsilon_A$ if $A=B$ and $0$ otherwise. 
The idea of constructing the idempotents in this way is well known, for example in \cite{mnukhin}. 
We immediately deduce the following~:

\begin{corollary}\label{isomorph}
Let $\mathcal{F}$ be the algebra of real-valued functions on $\allsets$ with pointwise multiplication. The following map is an algebra isomorphism between $\mathcal{F}$ and $\dign$~:
\[\theta\ :\ f\mapsto \sum_{A\subseteq\groundset} f(A)\varepsilon_A\]
Moreover, $\theta$ is the inverse of the evaluation map.
\end{corollary}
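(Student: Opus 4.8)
The plan is to deduce the corollary entirely from Lemma \ref{evalepsilon} together with the already-established fact that the evaluation map is an algebra isomorphism from $\dign$ onto $\mathcal{F}$. Because the inverse of an algebra isomorphism is again an algebra isomorphism, I would avoid proving multiplicativity of $\theta$ by hand; instead it suffices to show that $\theta$ is a two-sided inverse of evaluation, and the isomorphism claim then follows for free.

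First I would check one composite. Writing $\mathrm{ev}$ for the evaluation map, for an arbitrary $f\in\mathcal{F}$ and $B\subseteq\groundset$ I compute
\[ \mathrm{ev}\bigl(\theta(f)\bigr)(B)=\Bigl(\sum_{A\subseteq\groundset}f(A)\varepsilon_A\Bigr)(B)=\sum_{A\subseteq\groundset}f(A)\,\varepsilon_A(B)=f(B), \]
the last step being exactly Lemma \ref{evalepsilon}, which annihilates every term except $A=B$. Thus $\mathrm{ev}\circ\theta=\mathrm{Id}_{\mathcal{F}}$. Since $\mathrm{ev}$ is a bijection, this single identity forces $\theta=\mathrm{ev}^{-1}$, whence $\theta$ is bijective, $\theta\circ\mathrm{ev}=\mathrm{Id}_{\dign}$, and $\theta$ is an algebra isomorphism precisely because it inverts one. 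This also settles the ``moreover'' clause in one stroke.

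Should a self-contained verification be preferred, I would instead argue multiplicativity directly from the idempotent relations $\varepsilon_A\cdot\varepsilon_B=\varepsilon_A$ when $A=B$ and $0$ otherwise: the double sum in $\theta(f)\cdot\theta(g)=\sum_{A,B}f(A)g(B)\,\varepsilon_A\varepsilon_B$ collapses to $\sum_A f(A)g(A)\varepsilon_A=\theta(fg)$, while $\theta$ of the constant function $1$ equals $\sum_A\varepsilon_A$, which one identifies with the unit $p_{\emptyset}$ of $\dign$ by evaluating both at every $B$. I do not expect any genuine obstacle here: all the substantive content has already been extracted in equation (\ref{formuleeps}) and Lemma \ref{evalepsilon}, so the corollary is, as the surrounding text says, an immediate deduction.
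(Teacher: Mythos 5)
Your argument is correct and is exactly the paper's: the paper's one-line proof also deduces everything from Lemma \ref{evalepsilon} by observing that the evaluation of $\theta(f)$ is $f$, relying on the previously established fact that evaluation is an algebra isomorphism. Your write-up merely makes the implicit steps (bijectivity of evaluation forcing $\theta=\mathrm{ev}^{-1}$) explicit, which is a faithful expansion rather than a different route.
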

\begin{proof}According to Lemma \ref{evalepsilon}, the evaluation of $\theta\left(f\right)$ is $f$.\end{proof}

This is very useful for computing products in $\dign$.  In the following, we develop the link  between the polynomials $\varepsilon_A$ and the partition structure, but first we prove that every generalised orbit algebra contains 1, and more precisely, that every generalised orbit algebra contains a particular subspace~: the orbit algebra of $\perms$.

\begin{lemma}\label{card} Every generalised orbit algebra $\mathcal{D}$ contains the following polynomials~:
\[\sum_{A\ s.t.\ \vert A \vert=k} p_A,\quad k\in\{0\ldots n\}\]
\end{lemma}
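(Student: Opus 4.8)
The plan is to reduce the statement to the single assertion that $1\in\mathcal{D}$, and then to extract $1$ from an arbitrary nonzero element by combining a grading argument with the multiplicative structure of the subalgebra.

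First I would record the reduction. Suppose we already knew $1\in\mathcal{D}$. Then complementation gives $\complement(1)=p_{\groundset}=\sum_{\vert A\vert=n}p_A\in\mathcal{D}$, and by (\ref{derivpow}) one has $\partial^k(p_{\groundset})=k!\sum_{\vert B\vert=n-k}p_B$. Since $\mathcal{D}$ is closed under $\partial$ and under scalar multiplication, letting $k$ run from $0$ to $n$ shows that every sum $\sum_{\vert A\vert=j}p_A$ lies in $\mathcal{D}$. So it suffices to prove $1\in\mathcal{D}$.

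To produce $1$, the key step is to show that $\mathcal{D}$ is homogeneous for the size grading. I would introduce the raising operator $\delta=\complement\circ\partial\circ\complement$; a direct computation gives $\delta(p_A)=\sum_{i\notin A}p_{A\cup i}$, and since $\complement$ is an involution preserving $\mathcal{D}$ and $\partial$ preserves $\mathcal{D}$, so does $\delta$. Consider $M=\delta\partial-\partial\delta$, which therefore preserves $\mathcal{D}$. On the level space $L_k=\langle p_A:\vert A\vert=k\rangle$ the off-diagonal ``swap'' terms of $\delta\partial$ and $\partial\delta$ cancel and $M$ acts as the scalar $2k-n$. These $n+1$ eigenvalues are pairwise distinct, so the spectral projections onto the $L_k$ are polynomials in $M$ and hence also preserve $\mathcal{D}$; it follows that $\mathcal{D}=\bigoplus_{k}\left(\mathcal{D}\cap L_k\right)$.

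With homogeneity in hand, the extraction of $1$ is short. Since $\mathcal{D}\neq0$, choose a nonzero homogeneous $q=\sum_{\vert A\vert=m}\alpha_A p_A\in\mathcal{D}\cap L_m$. As $\mathcal{D}$ is a subalgebra, $q^2\in\mathcal{D}$, and by homogeneity its degree-$m$ component again lies in $\mathcal{D}$; because $p_Ap_B=p_{A\cup B}$ and $\vert A\cup B\vert=m$ forces $A=B$, that component is $\sum_{\vert A\vert=m}\alpha_A^2\,p_A$. Applying $\partial^m$ and invoking (\ref{derivpow}) yields $m!\left(\sum_{\vert A\vert=m}\alpha_A^2\right)\cdot1\in\mathcal{D}$, with coefficient strictly positive since $q\neq0$; hence $1\in\mathcal{D}$. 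I expect the main obstacle to be precisely the grading: a naive attempt to reach $1$ by applying $\partial^m$ directly to $q$ gives $m!\left(\sum_A\alpha_A\right)1$, which can vanish, whereas squaring replaces the coefficient sum by a sum of squares and removes this difficulty — but squaring is only legitimate componentwise once $\mathcal{D}$ is known to be homogeneous, and that homogeneity is exactly what forces us to use closure under \emph{both} $\partial$ and $\complement$ through the operator $M$.
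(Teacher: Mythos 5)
Your proof is correct, but it reaches the key sum-of-squares step by a genuinely different route than the paper. The paper takes an arbitrary nonzero $p$ and expands it in the idempotent basis $\varepsilon_A$ (this uses the fact, coming from Lemma \ref{lpuiss} and equation (\ref{dwithl}), that a generalised orbit algebra is closed under $l$, $l^{-1}$ and hence under $\varepsilon$ and $\varepsilon^{-1}$); squaring there immediately gives $\sum_A\alpha_A^2\varepsilon_A$, which is pulled back by $\varepsilon^{-1}$ to $\sum_A\alpha_A^2 p_A$ and collapsed onto $p_{\emptyset}$ by $\partial^k$ for the maximal relevant $k$. You instead first prove that $\mathcal{D}$ splits as $\bigoplus_k\left(\mathcal{D}\cap L_k\right)$ via the commutator $M=\delta\partial-\partial\delta$ with $\delta=\complement\circ\partial\circ\complement$ — a Terwilliger-algebra-flavoured argument whose computation ($M=(2k-n)\,\mathrm{id}$ on $L_k$, distinct eigenvalues, spectral projections as polynomials in $M$) checks out — and then square a homogeneous element in the monomial basis, where the degree-$m$ component of $q^2$ is $\sum_A\alpha_A^2p_A$ because $\vert A\cup B\vert=m$ forces $A=B$. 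Both proofs hinge on the same trick of replacing a possibly-cancelling coefficient sum by a sum of squares before applying $\partial^m$. What each buys: the paper's route reuses machinery ($l$, $\varepsilon$) it has already built and needs later, and requires no grading; your route is self-contained apart from the commutator computation and delivers the homogeneity of $\mathcal{D}$ as a by-product — a structural fact the paper only obtains afterwards, as a consequence of this lemma together with Theorem \ref{generalised orbits}. Your reduction of the full statement to $1\in\mathcal{D}$ matches the paper's closing step exactly.
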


\begin{proof} $\mathcal{D}$ being a generalised orbit algebra, there exists
\[p=\sum_{A\subseteq \groundset}\alpha_A\varepsilon_A \neq 0\ \in \mathcal{D}\]
As we have seen, the $\varepsilon_A$ are idempotents, thus~:
\[p^2=\sum_{A\subseteq \groundset}\alpha_A^2\varepsilon_A \in \mathcal{D}\]
and $\varepsilon$ is a linear isomorphism, so~:
\[\varepsilon^{-1}(p^2)=\sum_{A\subseteq \groundset}\alpha_A^2 p_A \in \mathcal{D}\]

Now if we let $k=max\left\{r\ /\ \exists A\subseteq\groundset\ s.t.\ \vert A \vert =r\ and\ \alpha_A\neq 0\right\}$, then 

\[\partial^k\circ\varepsilon^{-1}(p^2)=\underbrace{\left(\sum_{A\subseteq \groundset\ s.t. \ \vert A\vert=k}k!\alpha_A^2\right)}_{\neq 0} p_{\emptyset} \in \mathcal{D}\]

We have $1=p_{\emptyset} \in \mathcal{D}$, and $\mathcal{D}$ is closed under complementation, so $p_{\groundset} \in \mathcal{D}$. We conclude by noting that~:
\[\frac{1}{s!}\partial^s(p_{\groundset})=\sum_{A\ s.t.\ \vert A \vert=n-s} p_A,\quad s\in\{0\ldots n\}\]

\end{proof}
We now emphasize the link between the partition associated to a generalised orbit algebra by Theorem \ref{partition} with a natural basis of this generalised orbit algebra.

\begin{definition}
A \emph{generalised orbit} of a generalised orbit algebra $\mathcal{D}$ is a nonempty subset $\gorb{}$ of $\allsets$ such that
\begin{enumerate}
\item \[\sum_{A\in \gorb{}} p_A \in \mathcal{D}\]
\item $\gorb{}$ is minimal with respect to this property~: 
\[\forall\ \gorb{}'\subsetneq \gorb{},\ \gorb{}'\neq \emptyset\quad  \sum_{A\in \gorb{}'} p_A\; \notin\; \mathcal{D}\]
\end{enumerate}
\end{definition}

It is clear that two different generalised orbits $\gorb{1}$ and $\gorb{2}$ of $\mathcal{D}$ are disjoint, because if not, any generalised orbit algebra being closed under $\varepsilon$, $\gorb{1}\cap \gorb{2}$ would be a generalised orbit of $\mathcal{D}$, as~: 
 \[\left(\sum_{A\in \gorb{1}} \varepsilon_A\right)\left(\sum_{B\in \gorb{2}} \varepsilon_B\right)=
\sum_{C\in \gorb{1}\cap \gorb{2}} \varepsilon_C\]
This cannot happen, because $\gorb{1}$ and $\gorb{2}$ are minimal. The same argument shows that the set of all subsets $\gorb{}$ of $\allsets$ such that $\sum_{A\in \gorb{}} p_A \in \mathcal{D}$ is closed under intersection. Moreover, we have already seen\footnote{in Lemma \ref{card}} that the polynomials $\sum_{A\ s.t.\ \vert A \vert=k} p_A,\ k\in\{0\ldots n\}$, are in $\mathcal{D}$, thus any subset of $\groundset$ is in a generalised orbit.
We conclude that the generalised orbits of a generalised orbit algebra form a partition of $\allsets$.

\begin{theorem}\label{generalised orbits} Let $\mathcal{D}$ be a generalised orbit algebra, and $\gorb{1},\ldots,\gorb{s}$ the partition of $\allsets$ associated to $\mathcal{D}$ by Theorem \ref{partition}, that is~:
\[\mathcal{D}=\left\{p\ \in \mathcal{S}_n\quad s.t.\quad \forall i=1\ldots s\quad \forall (A,B)\in \gorb{i}^2 \quad p(A)=p(B)   \right\}\]
Then :
\begin{enumerate}
\item The associated set of polynomials $\varepsilon_{\gorb{i}}=\sum_{A\in \gorb{i}} \varepsilon_A$ is a basis of $\mathcal{D}$ as a real vector space.
\item The associated set of polynomials $p_{\gorb{i}}=\sum_{A\in \gorb{i}} p_A$ is a basis of $\mathcal{D}$ as a real vector space.
\item The generalised orbits of $\mathcal{D}$ are the sets $\gorb{1},\ldots,\gorb{s}$. 
\end{enumerate}
\end{theorem}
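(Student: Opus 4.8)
The plan is to put both descriptions of $\mathcal{D}$ on the same footing and reduce all three claims to a single structural fact. The partition $\gorb{1},\ldots,\gorb{s}$ comes from the \emph{function} picture of Theorem \ref{partition} ($\mathcal{D}$ is the polynomials that are constant on each block), whereas the generalised orbits and the sums $p_{\gorb{i}}$ live in the \emph{polynomial} picture attached to the basis $\{p_A\}$. The bridge between the two is the isomorphism $\varepsilon$ together with $\theta$ from Corollary \ref{isomorph}, and the linchpin I would establish first is that $\mathcal{D}$ is stable under $\varepsilon$ and $\varepsilon^{-1}$.

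For that stability, note that $l=\sum_{k=0}^{n}\partial^k/k!$ is a polynomial in $\partial$, and by Lemma \ref{lpuiss} with $m=-1$ so is $l^{-1}=\sum_{k=0}^{n}(-1)^k\partial^k/k!$. Hence closure of $\mathcal{D}$ under $\partial$ already forces closure under $l$ and $l^{-1}$; combined with closure under $\complement$, this makes $\varepsilon=\complement\circ l^{-1}\circ\complement$ and $\varepsilon^{-1}=\complement\circ l\circ\complement$ preserve $\mathcal{D}$. Since $\varepsilon$ is a global isomorphism of $\dign$, its restriction is a linear automorphism of the finite-dimensional space $\mathcal{D}$. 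I regard this as the main obstacle, in the sense that it is the one place where both operators must be used at once; everything afterward is transport of structure along $\theta$ and $\varepsilon$.

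Part (1) then follows quickly: by Lemma \ref{evalepsilon} and Corollary \ref{isomorph}, $\varepsilon_{\gorb{i}}=\sum_{A\in\gorb{i}}\varepsilon_A=\theta(\mathbbm{1}_{\gorb{i}})$ is precisely the indicator of the block $\gorb{i}$ seen as a function on $\allsets$. Being constant on every block, it lies in $\mathcal{D}$ by Theorem \ref{partition}. The $\varepsilon_{\gorb{i}}$ have pairwise disjoint supports, hence are independent, and any $p\in\mathcal{D}$, taking a constant value $c_i$ on $\gorb{i}$, equals $\sum_i c_i\varepsilon_{\gorb{i}}$; so they span, giving a basis. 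For part (2) I would use $\varepsilon(p_A)=\varepsilon_A$, whence $\varepsilon(p_{\gorb{i}})=\varepsilon_{\gorb{i}}$, i.e.\ $p_{\gorb{i}}=\varepsilon^{-1}(\varepsilon_{\gorb{i}})$; applying the automorphism $\varepsilon^{-1}$ of $\mathcal{D}$ to the basis of part (1) carries it to $\{p_{\gorb{i}}\}$, which is therefore also a basis of $\mathcal{D}$.

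Finally, for part (3) the same reduction does the work. For any $\gorb{}\subseteq\allsets$ we have $\varepsilon\bigl(\sum_{A\in\gorb{}}p_A\bigr)=\sum_{A\in\gorb{}}\varepsilon_A$, and since $\varepsilon$ maps $\mathcal{D}$ bijectively onto itself, $\sum_{A\in\gorb{}}p_A\in\mathcal{D}$ iff $\sum_{A\in\gorb{}}\varepsilon_A\in\mathcal{D}$. The latter sum is $\theta(\mathbbm{1}_{\gorb{}})$, which lies in $\mathcal{D}$ exactly when $\mathbbm{1}_{\gorb{}}$ is constant on each block, i.e.\ when $\gorb{}$ is a union of the blocks $\gorb{1},\ldots,\gorb{s}$. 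Thus the nonempty subsets whose $p$-sum lies in $\mathcal{D}$ are precisely the unions of blocks, and the minimal ones — the generalised orbits — are exactly the individual blocks $\gorb{i}$, completing the identification.
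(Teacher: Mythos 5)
Your proposal is correct and follows essentially the same route as the paper: part (1) via the identification of $\varepsilon_{\gorb{i}}$ with the indicator function of the block, part (2) by transporting the basis along $\varepsilon^{-1}$, and part (3) by evaluating $\sum_{A\in\gorb{}}\varepsilon_A$ to force $\gorb{}$ to be a union of blocks. Your explicit verification that $\mathcal{D}$ is closed under $\varepsilon$ and $\varepsilon^{-1}$ (via $l^{\pm 1}$ being polynomials in $\partial$) is a detail the paper establishes only in the surrounding discussion, and including it is a welcome clarification rather than a deviation.
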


\begin{proof}
\begin{enumerate}
\item The $\varepsilon_A,\ A\subseteq\groundset$, constitute a basis of $\dign$ which contains $\mathcal{D}$, and the $\gorb{i}$ are disjoint, so the family is independent. If we consider an element $p$ of $\mathcal{D}$, we have, by Corollary \ref{isomorph}~: \[p=\sum_{A\subseteq\groundset}p(A)\varepsilon_A\]
Thus the family also generates $\mathcal{D}$.
\item As $\varepsilon$ is invertible, $\varepsilon^{-1}$ maps any basis of $\mathcal{D}$ to another basis of $\mathcal{D}$.
\item If $\sum_{A \in \gorb{}} p_A \in \mathcal{D}$ with $\gorb{}\subseteq \gorb{i}$ for some $i$, then if we put $q=\sum_{A\in\gorb{}}\varepsilon_A$, $q$ belongs to $\mathcal{D}$. Suppose that there exists both $C\in\gorb{i}\setminus \gorb{}$ and $D\in \gorb{}$. We would have $q(C)=0$ and $q(D)=1$, which would be a contradiction to the definition of $\gorb{i}$.
\end{enumerate}
\end{proof}

Various properties of this \emph{generalised orbit system} follow from Theorem \ref{partition}. For instance, if $\mathcal{D}_1$ and $\mathcal{D}_2$ are two generalised orbit algebras such that $\mathcal{D}_1\subseteq\mathcal{D}_2$, then the generalised orbits of $\mathcal{D}_2$ are a refinement of those of $\mathcal{D}_1$. According to Lemma \ref{card}, the generalised orbits of any generalised orbit algebra are a refinement of those of $\dign^{\perms}$, which means that all the sets in a generalised orbit $\gorb{}$ have the same size $\sharp\gorb{}$.

Generalised orbit algebras share the property of being closed under $\varepsilon$ with orbit algebras, that is, their generalised orbit system index one of their basis. From equation (\ref{dwithl}), we infer that generalised orbit algebras are exactly the nonempty and nonzero subalgebras of $\dign$ that are closed under $\varepsilon$ and $\complement$. For an example of an algebra closed under $\varepsilon$ but not $\complement$, one may consider the vector space spanned by~:
\[\left\{p_{\emptyset},\ p_{\{1\}}+p_{\{2\}},\ p_{\{3\}},\ p_{\{1,2\}},\ p_{\{1,3\}},\ p_{\{2,3\}},\ p_{\{1,2,3\}} \right\}\]
This basis is indexed by the generalised orbits of the partition given by Theorem \ref{partition} in a very natural way.

An important property of orbit algebras, and of the orbits of a permutation group on $\allsets$, is that the inclusion relations, and the stronger intersection relations have nice properties : for example, the number of elements of an orbit $\bl{O}_1$ that intersects a subset of size $r$ of a set $A$ depends only on the orbit $\bl{O}_2$ of $A$, and not on $A$ itself. In the following, we embark on a systematic study of such properties, our hope being to generalize them to generalised orbit algebras.

\begin{definition} We denote by $\firstcom$ the set of linear functions $h$ from $\mathcal{S}_n$ to itself such that~:
\[\forall \sigma \in \perms,\ h\circ\sigma = \sigma\circ h\]
\end{definition}

Given two pairs of subsets $(A_1,A_2)$ and $(B_1,B_2)$ of $\groundset$, one can see that there exists a permutation in $\perms$ that simultaneously maps $A_1$ to $B_1$ and $A_2$ to $B_2$ if and only if~:
\begin{itemize}
\item $\vert A_1\vert=\vert B_1\vert$
\item $\vert A_2\vert=\vert B_2\vert$
\item $\vert A_1 \cap A_2\vert=\vert B_1  \cap B_2\vert$
\end{itemize}

\noindent We deduce that the mappings 
\[E_{k,l,r}:p_A\mapsto \left\{\begin{array}{ll}\sum_{\substack{B\ s.t\ \vert B\vert =l\\ \vert A\cap B\vert=r}}p_B & \textrm{if}\ \vert A\vert=k\\0&\textrm{otherwise} \end{array} \right. \]
where $k,l,r$ are non-negative integers such that $r\leq k$, $r\leq l$, $k+l-r\leq n$, form a basis of $\firstcom$. As an example, if $k$ is an integer between $0$ and $n$, we shall denote by $id_k$ the function $E_{k,k,k}$ which maps a polynomial $p_A$ to itself if $\vert A\vert=k$, and to $0$ otherwise.

We observe that every orbit algebra is closed under any mapping in $\firstcom$. We will show that this is indeed true for any generalised orbit algebra with the following~:

\begin{theorem}$\firstcom$ is generated by $\partial$ and $\complement$ as a real algebra under composition of mappings.
\end{theorem}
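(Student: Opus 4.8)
The plan is to show the two inclusions $\langle\partial,\complement\rangle\subseteq\firstcom$ and $\firstcom\subseteq\langle\partial,\complement\rangle$, where $\langle\partial,\complement\rangle$ denotes the real algebra generated by $\partial$ and $\complement$ under composition. The first inclusion is immediate: the excerpt records that $\sigma\circ\partial=\partial\circ\sigma$ and $\sigma\circ\complement=\complement\circ\sigma$ for every $\sigma\in\perms$, so $\partial,\complement\in\firstcom$, and since $\firstcom$ is closed under composition and $\field$-linear combinations, the generated algebra lies inside it. The real work is the reverse inclusion, for which it suffices to produce every basis element $E_{k,l,r}$ inside $\langle\partial,\complement\rangle$. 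Throughout I write $V_k$ for the span of the $p_A$ with $\vert A\vert=k$, so that $\dign=\bigoplus_k V_k$; note $\partial$ sends $V_k$ to $V_{k-1}$ and, setting $\delta:=\complement\circ\partial\circ\complement\in\langle\partial,\complement\rangle$, a direct computation gives $\delta(p_A)=\sum_{i\notin A}p_{A\cup i}$, so $\delta$ sends $V_k$ to $V_{k+1}$.

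The first step is to manufacture the level projectors $id_k$. I would compute the commutator $H:=\partial\circ\delta-\delta\circ\partial$ on $V_k$: the off-diagonal contributions cancel and one is left with the scalar $(n-k)-k=n-2k$, so $H$ acts on $V_k$ as multiplication by $n-2k$. Since the values $n-2k$ for $k=0,\dots,n$ are pairwise distinct, Lagrange interpolation produces, for each $k$, a real polynomial $q_k$ with $q_k(n-2j)=1$ if $j=k$ and $0$ otherwise; then $q_k(H)=id_k$, and as $H\in\langle\partial,\complement\rangle$ we conclude $id_k\in\langle\partial,\complement\rangle$ for every $k$.

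The key step is a single computation that recovers all the $E_{k,l,r}$ at once, for arbitrary levels $k,l$. For $r\le\min(k,l)$ consider $\Theta_r:=\delta^{\,l-r}\circ\partial^{\,k-r}\circ id_k$, which lies in $\langle\partial,\complement\rangle$ by the previous step. Applying it to $p_A$ with $\vert A\vert=k$, one drops to the $r$-subsets $D\subseteq A$ (with multiplicity $(k-r)!$) and then climbs to the $l$-supersets $C\supseteq D$ (with multiplicity $(l-r)!$); collecting terms by the target set $C$ gives
\[
\Theta_r=(k-r)!\,(l-r)!\sum_{r'=r}^{\min(k,l)}\binom{r'}{r}\,E_{k,l,r'},
\]
because for fixed $C$ the number of admissible $D$ is $\binom{\vert A\cap C\vert}{r}$. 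As $r$ ranges over the valid intersection sizes, the transition matrix from the $E_{k,l,r'}$ to the $\Theta_r$ is triangular with nonzero diagonal entries $(k-r)!\,(l-r)!$, hence invertible; inverting it expresses each $E_{k,l,r'}$ as a linear combination of the $\Theta_r$, so every $E_{k,l,r}\in\langle\partial,\complement\rangle$. Since the $E_{k,l,r}$ form a basis of $\firstcom$, this yields $\firstcom\subseteq\langle\partial,\complement\rangle$ and completes the proof.

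I expect the main obstacle to be exactly this last step, and more precisely the fact that $\partial$ and $\complement$ only ever encode \emph{containment} relations (full intersection, $r=\min(k,l)$), whereas a generic $E_{k,l,r}$ records a prescribed partial intersection. Disentangling the intersection parameter is what forces the descend-then-ascend construction $\delta^{\,l-r}\partial^{\,k-r}$ together with the triangular inversion above; the projectors $id_k$ are needed precisely to confine this operator to a single source level so that the clean binomial identity holds. The remaining verifications — the eigenvalue count for $H$, the binomial bookkeeping, and the triangularity — are routine once the construction is in place.
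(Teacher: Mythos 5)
Your proof is correct, and it reaches the same key identity as the paper --- your $\Theta_r=\delta^{\,l-r}\circ\partial^{\,k-r}\circ id_k$ is literally the paper's operator $\complement\circ\partial^{\,l-r}\circ\complement\circ\partial^{\,k-r}\circ id_k$ (since $\delta^{\,m}=\complement\circ\partial^{\,m}\circ\complement$), with the same expansion $(k-r)!(l-r)!\sum_{r'\ge r}\binom{r'}{r}E_{k,l,r'}$ and the same triangular inversion. Where you genuinely diverge is in how you obtain the projectors $id_k$. The paper runs an induction on $k\le n/2$: it first extracts $E_{k,k,0}$ from $\partial^{\,n-2k}\circ\complement=(n-2k)!\sum_r E_{r,2k-r,0}$, then climbs to $id_k=E_{k,k,k}$ via the recursion $E_{k-1,k,t}\circ\partial=E_{k,k,t}+E_{k,k,t+1}$; this is longer and needs some bookkeeping about which $E_{u,v,w}$ are already available, but the intermediate identities it produces are reused later in the paper (notably the injectivity of $\partial$ on levels $k>n/2$ and the Livingstone--Wagner statement). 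You instead observe that $H=[\partial,\delta]$ acts on the level-$k$ subspace as the scalar $n-2k$ --- the standard $\mathfrak{sl}_2$ structure on the Boolean lattice --- and recover each $id_k$ as a Lagrange interpolation polynomial in $H$ (the constant terms are harmless since $\complement\circ\complement=\mathrm{Id}$ is in your algebra). This is shorter, requires no induction, and yields all the $id_k$ at once without any restriction to $k\le n/2$, at the price of giving the projectors only as abstract polynomials in $H$ rather than as combinatorially meaningful words in $\partial$ and $\complement$. Your final inversion is also slightly cleaner in that it handles arbitrary levels $k,l$ directly, whereas the paper's statement of the induction only covers $\min(u,v)\le n/2$ and implicitly relies on conjugation by $\complement$ for the remaining cases.
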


We point out to the reader that $\firstcom$ is known as the Terwilliger algebra of the hypercube, and have already been the object of intensive study (see \cite {go}), with surprising applications (for example \cite {schrijver}). We independently provide here a set of generators that is more convenient for our purposes.

\begin{proof}
We will prove by induction that for every integer $k$ between $0$ and $\frac{n}{2}$, the $E_{u,v,w}$ where either $u\leq k$ or $v\leq k$ can be generated by $\partial$ and $\complement$, by composition, and by taking real linear combinations. 
\begin{itemize}
\item If $k=0$, we see that for every integer $l$~:
\[\partial^{n-l}\circ\complement\circ\partial^n\circ\complement= n! \left( n-l\right)! E_{0,l,0}\]
\[\partial^{l}\circ E_{0,0,0}=l!E_{l,0,0}\]
\item Suppose that for every integer $l$ less than $k\leq\frac{n}{2}$, we can generate all the $E_{u,v,w}$, with either $u\leq l$ or $v\leq l$.
Note that~:
\[\partial^{n-2k}\circ\complement=(n-2k)!\sum_{r=0}^{2k}E_{r,2k-r,0}\]
This allows us to construct $E_{k,k,0}$ as the only member of the right hand side not already addressed by the induction hypothesis.
By means of a right composition, we can generate the~:
\begin{equation}\label{derivcomp}
E_{k-1,k,t}\circ\partial=E_{k,k,t}+E_{k,k,t+1},\quad t=0\ldots k-1
\end{equation}
By induction, we construct $E_{k,k,t+1}$ for $t=0\ldots,k-1$.
In particular, $id_k=E_{k,k,k}$ is generated.
To conclude, we observe that~:
\begin{eqnarray}\label{bascom1}
\complement\circ\partial^{v-r}\circ\complement\circ\partial^{u-r}\circ id_u=(u-r)!(v-r)!\sum_{w=r}^{\min(u,v)} \binom{w}{r} E_{u,v,w} 
\end{eqnarray}
and that this is equal to 
\begin{eqnarray}\label{bascom2}
id_v\circ\complement\circ\partial^{v-r}\circ\complement\circ\partial^{u-r}=(u-r)!(v-r)!\sum_{w=r}^{\min(u,v)} \binom{w}{r} E_{u,v,w} 
\end{eqnarray}
For any $u$ and $v$ with either $u\leq k$ or $v\leq k$, we can now retrieve the $E_{u,v,w},\ w=0\ldots \min(u,v)$ by a triangular linear system.
\end{itemize}
\end{proof}

As previously mentioned, any generalised orbit algebra is therefore closed under any mapping of $\firstcom$. In other words, if $\mathcal{D}$ is a generalised orbit algebra with generalised orbits $\gorb{1},\ldots,\gorb{s}$, then for any generalised orbit $\gorb{i}$ the polynomial $E_{k,l,r}\left(p_{\gorb{i}}\right)$ is a member of $\mathcal{D}$. 
We see that~:
\[E_{k,l,r}\left(p_{\gorb{i}}\right)=\sum_{A\ s.t.\ \vert A\vert=l} \left\vert \left\{B \in \gorb{i}\ s.t.\ \vert B\cap A\vert=r \right\} \right\vert p_{A}\]
As this polynomial is a member of $\mathcal{D}$, we conclude that if $\gorb{i}$ and $\gorb{j}$ are two generalised orbits of $\mathcal{D}$, then for every set $A$ in $\gorb{j}$, the number of sets in $\gorb{i}$ that intersects a subset of size $r$ of $A$ is independent of the choice of $A$ in $\gorb{j}$.

We can also see that in any generalised orbit algebra, the restriction of $\partial$ to the orbits of size $k$ is injective if $k>\frac{n}{2}$. 
Indeed by equation \ref{derivcomp}, we have~: 
\[E_{k-1,k,t}\circ\partial=E_{k,k,t}+E_{k,k,t+1},\quad t=0\ldots k-1\]
If $k>\frac{n}{2}$, we have $E_{k,k,0}=0$ and thus~:
\[\sum_{t=0}^{k-1}(-1)^{k-1-t} E_{k-1,k,t}\circ\partial=(-1)^{k-1} E_{k,k,0}+E_{k,k,k}=id_k\]
so the restriction of $\partial$ to the orbits of size $k$ is injective. 

As $\partial\circ id_k=E_{k,k-1,k-1}$, we see that $\complement\circ\partial\circ\complement=E_{n-k,n-k+1,n-k+1}$ is injective. With $r=n-k$ this yields that $E_{r,r+1,r}$ is injective if $r<\frac{n}{2}$, and we remark that its transpose in the canonical basis is $E_{r+1,r,r}$ which is then surjective. In any generalised orbit algebra, the number of generalised orbits with cardinality $k+1$ is then at least the number of generalised orbits with cardinality $k$ if $k<\frac{n}{2}$, so to say the Livingstone-Wagner theorem applies.

We shall now investigate further this idea that any two sets in a generalised orbit of a generalised orbit algebra intersect in the same way every generalised orbit of $\mathcal{D}$, by looking at intersection properties of several generalised orbits.
If $k$ is a positive integer, we consider the tensor product of $k$ copies of $\dign$~: \[\kdign=\underbrace{\dign \otimes \ldots \otimes \dign}_{k}\]
This is a real vector space of dimension $2^{kn}$, with the natural basis~:
\[p_S=p_{S_1}\otimes \ldots\otimes p_ {S_k},\;\textrm{ where }\; S\in\allsets^k\]

\begin{definition} We denote by $\com{k}$ the set of linear functions $h$ from $\kdign$ to $\dign$ such that~:
\[\forall \sigma \in \perms,\ \sigma \circ h = h \circ(\underbrace{\sigma \otimes \sigma \otimes \ldots \otimes \sigma}_{k})\]
\end{definition}

We have already studied the first case ($k=1$) with $\firstcom$. As before, the orbits of $\perms$ on the $(k+1)$-tuples of subsets of $\groundset$ give a basis of $\com{k}$. One can show that the orbit of such a $(k+1)$-tuple 
$\left(S_1,\ldots,S_{k+1}\right)$ is uniquely defined by the function $\mu_{S}$ whose value on every subset $J$ of $\left\{1,\ldots,k+1\right\}$ is 
$\mu_S\left(J\right)=\left\vert \bigcap_{j\in J} S_j\right\vert$. It can be shown that an integer-valued function $\mu$ on $\mathcal{P}\left(\{1,\ldots,k+1\}\right)$ is indeed associated with an orbit if and only if 
\[\forall J\subseteq \{1,\ldots,k+1\}, \sum_{\substack{L\subseteq\{1,\ldots,k+1\}\\L\supseteq J}} (-1)^{\vert L\vert-\vert J\vert} \mu\left(L\right)\geq 0\]

In this case, we shall say that $\mu$ is an \emph{incidence function} of \emph{order} $k+1$.
If a $(k+1)$-tuple $\left(S_1,\ldots,S_{k+1}\right)$ is in the orbit defined by such a function $\mu$, we shall write $\left(S_1,\ldots,S_{k+1}\right)\vdash\mu$. We can also show that the number of orbits is $\binom{n+2^{k+1}-1}{2^{k+1}-1}$.
This enables us to define a basis of $\com{k}$, namely the set of functions 
\[E_\mu:p_{S_1}\otimes \ldots\otimes p_{S_k}\mapsto \sum_{\substack{A\ s.t\ \left(S_1,\ldots,S_k,A\right)\vdash\mu}}p_A \]
where $\mu$ runs over all the incidence functions of order $k+1$.
One can easily see that the $E_{k,l,r}$ we used previously are indeed the $E_\mu$, with $\mu$ running through the incidence functions of order two.

The functions in $\com{k}$ characterise orbit algebras, as~:

\begin{theorem} \label{comkstab} Let $\mathcal{D}$ be a nonempty and nonzero subset of $\dign$ such that for every positive integer $k$, \[\com{k}\left(\mathcal{D}^{\otimes k}\right)\subseteq \mathcal{D}\]
Then $\mathcal{D}$ is an orbit algebra.
\end{theorem}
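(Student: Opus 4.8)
My plan is to first notice that the hypothesis already forces $\mathcal{D}$ to be a generalised orbit algebra, and then to use the closure at one enormous value of $k$ to manufacture a permutation group whose orbits are exactly the generalised orbits of $\mathcal{D}$. For the first part I read $\mathcal{D}^{\otimes k}$ as the tensor power of the subspace $\mathcal{D}$, so that every pure tensor $p_1\otimes\cdots\otimes p_k$ with $p_i\in\mathcal{D}$ lies in it. Taking $k=1$ gives $\com{1}(\mathcal{D})\subseteq\mathcal{D}$, and since $\com{1}=\firstcom$ is generated by $\partial$ and $\complement$, the space $\mathcal{D}$ is closed under $\partial$, $\complement$, $\varepsilon$ and $\varepsilon^{-1}$. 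The multiplication map $m:p\otimes q\mapsto p\cdot q$ is $\perms$-equivariant, hence $m\in\com{2}$, so applying it to the pure tensors $p\otimes q$ yields $\mathcal{D}\cdot\mathcal{D}\subseteq\mathcal{D}$. Repeating the computation in the proof of Lemma \ref{card} (square an element, apply $\varepsilon^{-1}$ and a power of $\partial$) then produces a nonzero multiple of $p_{\emptyset}$, so $1\in\mathcal{D}$. Thus $\mathcal{D}$ is a generalised orbit algebra, and Theorem \ref{generalised orbits} supplies its generalised orbits $\gorb{1},\ldots,\gorb{s}$, a partition of $\allsets$, with basis $\{p_{\gorb{i}}\}$; since these have disjoint supports, $\mathcal{D}$ is precisely the space of $\sum_B\lambda_B p_B$ whose coefficients $\lambda$ are constant on each block.

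Next I would introduce the candidate group $\Gamma=\{\sigma\in\perms : \sigma\cdot p=p \text{ for all } p\in\mathcal{D}\}$. Because $\sigma\cdot p_{\gorb{i}}=\sum_{C\in\gorb{i}}p_{\sigma(C)}$, a permutation lies in $\Gamma$ exactly when it preserves every block $\gorb{i}$ setwise; in particular each $\gorb{i}$ is $\Gamma$-invariant, so every $\Gamma$-orbit is contained in a single $\gorb{i}$, and $\mathcal{D}\subseteq\dign^{\Gamma}$. To obtain $\mathcal{D}=\dign^{\Gamma}$ it then suffices to prove that $\Gamma$ acts transitively on each $\gorb{i}$: the $\Gamma$-orbits would coincide with the blocks, and $\dign^{\Gamma}$, being the polynomials constant on $\Gamma$-orbits, would equal $\mathcal{D}$, an orbit algebra.

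The crux is this transitivity, and here is where I would use ``for every positive integer $k$'' decisively. Fix a block and two of its members $A_0,A$. I take $k+1=2^{n}$, index the coordinates by the subsets of $\groundset$, and apply $\com{k}$ to the pure tensor $\bigotimes_{C\neq C_*}p_{\gorb{(C)}}$, where $\gorb{(C)}$ is the block containing $C$ and $C_*$ is the subset carried by the output coordinate. Let $\mu_0$ be the incidence function of the identity family $(C)_{C\subseteq\groundset}$, i.e. $\mu_0(J)=|\bigcap_{C\in J}C|$; it is a genuine incidence function since it is realised by an actual tuple. The key is that $\mu_0$ is rigid: any family $(T_C)_C$ with this incidence function must have each $T_{\{x\}}$ a singleton, these pairwise disjoint, so $T_{\{x\}}=\{\sigma(x)\}$ for a unique $\sigma\in\perms$, and then $|T_C\cap T_{\{x\}}|=[x\in C]$ with $|T_C|=|C|$ forces $T_C=\sigma(C)$ for all $C$. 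Consequently
\[E_{\mu_0}\Big(\bigotimes_{C\neq C_*}p_{\gorb{(C)}}\Big)=\sum_{A}c(C_*,A)\,p_A,\qquad c(C_*,A)=|\{\sigma\in\Gamma : \sigma(C_*)=A\}|,\]
since the only families counted are $(\sigma(C))_C$ with $\sigma(C)\in\gorb{(C)}$ for every $C$, which is exactly the condition $\sigma\in\Gamma$ (a bijection preserving every block but one preserves the last as well). By hypothesis this polynomial lies in $\mathcal{D}$, so by the description of $\mathcal{D}$ above the coefficient $A\mapsto c(C_*,A)$ is constant on each block. Taking $C_*=A_0$ and using $c(A_0,A_0)\geq 1$ (the identity fixes $A_0$), I conclude $c(A_0,A)=c(A_0,A_0)\geq 1$ for every $A$ in the block of $A_0$, which yields a $\sigma\in\Gamma$ with $\sigma(A_0)=A$ and hence transitivity.

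The one genuinely creative step is the passage to $k+1=2^{n}$ coordinates indexed by all subsets together with the rigid incidence function $\mu_0$: this is what converts an abstract closure hypothesis into the concrete count $c(C_*,A)$ of group elements. Everything else (the reduction to a generalised orbit algebra and the Galois-type argument for $\Gamma$) is routine once this construction is in place. The only point I would treat with care is the meaning of $\mathcal{D}^{\otimes k}$ and the linearity of $\mathcal{D}$: closure under $\com{1}$ already gives closure under scalars, and I regard $\mathcal{D}$ as a subspace so that $\bigotimes_C p_{\gorb{(C)}}$ genuinely lies in $\mathcal{D}^{\otimes k}$ and so that membership of $\sum_A c(A_0,A)\,p_A$ in $\mathcal{D}$ really means block-constancy of the coefficients.
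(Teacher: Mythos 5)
Your proposal is correct and follows essentially the same route as the paper: both establish that $\mathcal{D}$ is a generalised orbit algebra, then apply a basis element of $\com{k}$ with $k$ on the order of $2^n$, indexed by all subsets of $\groundset$, to the tensor product of the block polynomials, and use the rigidity of the incidence function of the full tuple together with block-constancy of the output coefficients to extract a permutation preserving every generalised orbit while carrying $A_0$ to $A$. Your writeup merely makes explicit the rigidity argument and the reduction to transitivity of the stabilizer $\Gamma$, which the paper leaves as an unproved claim.
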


\begin{proof}
It is easy to see that any such set $\mathcal{D}$ is a generalised orbit algebra and that equality holds. As such, it possesses generalised orbits. Let us consider two sets $A$ and $B$ in the same generalised orbit $\gorb{}$. It is enough to show that there is a permutation of $\perms$ that send $A$ to $B$, and leaves $\mathcal{D}$ invariant.

If we consider a list $S_1,\ldots,S_{2^n}$ of all the subsets of $\groundset$, and a list $\gorb{1},\ldots,\gorb{2^n}$ of their respective generalised orbits (there might be repetitions), we consider $h$ to be the only basis element of $\com{2^n}$ such that $h\left(S_1,\ldots,S_{2^n}\right)=A$. 
We have : $h\left(\mathcal{D}^{\otimes k}\right)\subseteq \mathcal{D}$, so $h\left(\gorb{1},\ldots,\gorb{2^n}\right)$ is a member of $\mathcal{D}$. It has a nonzero coordinate on $\gorb{}$, and we deduce that there exist sets $\left(T_1,\ldots,T_{2^n}\right)$ such that $T_i\in \gorb{i}$ for every $i=1\ldots n$, and $h\left(T_1,\ldots,T_{2^n}\right)=B$.
We claim that there exists a permutation in $\perms$ sending $S_i$ to $T_i$ for every $i$. Such a permutation maps $A$ to $B$ and leaves $\mathcal{D}$ invariant.

\end{proof}

We shall study the first cases of this closeness property, and as we have already proved that it holds if $k=1$ for any generalized orbit algebra, we shall now turn to the case $k=2$.

First, the multiplication $m$ of $\dign$ is a member of $\com{2}$, because every permutation defines an algebra isomorphism, that is, if $p$ and $q$ are two elements of $\dign$, and $\sigma$ is a permutation in $\perms$, one has~: $\sigma\left(p\cdot q\right)=\sigma\left(p\right)\cdot \sigma\left(q\right)$. Also, one can see that if $u,v,w$ are three elements of $\firstcom$, then $\sigma\circ u\left(v(p)\cdot w(q)\right)=u\left(v\circ\sigma\left(p\right)\cdot  w\circ\sigma\left(q\right)\right)$, that is, $u\circ m \circ \left(v\otimes w\right)$ is a member of $\com{2}$.
It is also reassuring to see that for any generalised orbit algebra $\mathcal{D}$, those functions map any element of $\mathcal{D}\otimes\mathcal{D}$ into $\mathcal{D}$. We shall show that the second case of Theorem \ref{comkstab} always hold~:

\begin{theorem}\label{com2} $\com{2}$ is generated, as a real vector space, by the family~:
\[ u \circ m \circ (v\otimes w),\quad \mathrm{ where }\ u,v,w\ \in \firstcom\]
\end{theorem}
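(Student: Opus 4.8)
The plan is to prove the two inclusions separately. One containment is immediate from the remarks preceding the statement: each $u\circ m\circ(v\otimes w)$ with $u,v,w\in\firstcom$ commutes with the diagonal action of $\perms$, hence lies in $\com{2}$. All the work is in the reverse inclusion, and for this I would pass to the idempotent basis $\{\varepsilon_A\}$ of $\dign$. The point of this change of basis is that multiplication becomes diagonal: since the $\varepsilon_A$ are orthogonal idempotents, $m(\varepsilon_A\otimes\varepsilon_B)=\varepsilon_A$ if $A=B$ and $0$ otherwise. Writing an element $v$ of $\firstcom$ in this basis as an ``invariant matrix'' $v(\varepsilon_S)=\sum_T V_{T,S}\,\varepsilon_T$, where $V_{T,S}$ depends only on the triple $(|S|,|T|,|S\cap T|)$, a direct computation gives the coefficient of $\varepsilon_A$ in $u\circ m\circ(v\otimes w)(\varepsilon_{S_1}\otimes\varepsilon_{S_2})$ as the ``star contraction''
\[\sum_{T\subseteq\groundset}U_{A,T}\,V_{T,S_1}\,W_{T,S_2}.\]
Since a general element of $\com{2}$ is, in this basis, an arbitrary $\perms$-invariant $3$-tensor $H_{A,S_1,S_2}$ (constant on the orbits indexed by the incidence functions $\mu$ of order $3$, equivalently a linear combination of the $E_\mu$), the theorem reduces to the purely combinatorial claim that these star contractions span all invariant $3$-tensors.

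To prove that spanning statement I would argue by a rank count against the basis $\{E_\mu\}$, whose cardinality $\binom{n+7}{7}$ was recorded above. First I would record how the star contraction depends on the seven orbit parameters of $(S_1,S_2,A)$, namely the three sizes, the three pairwise intersection sizes, and the triple intersection $c=|S_1\cap S_2\cap A|$. Taking $U,V,W$ among the explicit generators $E_{k,l,r}$ of $\firstcom$ turns the contraction into a sum over $T$ of products of indicator conditions on the pairwise overlaps of $T$ with $A$, $S_1$, $S_2$; by choosing the parameters $(k,l,r)$ suitably one controls the sizes and pairwise intersections essentially freely, producing a triangular system in those coordinates. The remaining task is then to show that, with the sizes and all pairwise data held fixed, one can still separate configurations that differ only in the triple overlap $c$.

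The main obstacle is precisely this last point: each of $U,V,W$ constrains only a \emph{pairwise} relation between the summation hub $T$ and one of $A,S_1,S_2$, so no single factor ever refers to the genuinely three-way quantity $c$, and one must show that summing over the hub $T$ nonetheless yields tensors whose dependence on $c$ has full rank. Conceptually this is the statement that multiplication in $\dign$ collapses no Kronecker multiplicity: decomposing $\dign$ into $\perms$-isotypic components and applying Schur's lemma, the bimodule generated by $m$ under left composition by $\firstcom$ and right composition by $\firstcom\otimes\firstcom$ is all of $\com{2}$ if and only if, for every triple of irreducibles, the corresponding isotypic component of $m$ is injective on the associated multiplicity space. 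I expect the cleanest way to close the argument is to make this injectivity explicit for the two-row partitions occurring in the hypercube, or equivalently to exhibit, for each fixed profile of sizes and pairwise intersections, an invertible triangular system realizing every value of $c$; verifying the nondegeneracy of that system is where the real computation lies.
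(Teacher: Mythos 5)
Your setup is the same as the paper's: the family $E_{k,a_3,r_3}\circ\varepsilon^{-1}\circ m\circ(\varepsilon E_{a_1,k,r_1}\otimes \varepsilon E_{a_2,k,r_2})$ is exactly your ``star contraction'' over the hub $T$ (the paper's $U$), and the theorem does reduce, as you say, to inverting a linear system against the basis $E_\mu$ of incidence functions of order three. But the proposal stops precisely where the proof has to happen. You write that ``verifying the nondegeneracy of that system is where the real computation lies,'' and neither the Schur-lemma reformulation nor the appeal to Kronecker multiplicities supplies that verification; as stated, the claim that the contractions span all invariant $3$-tensors is assumed, not proved. That is a genuine gap.

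Two concrete points about how the gap gets filled, and where your plan as written would go astray. First, the triangularity is not obtained by ``controlling the sizes and pairwise intersections essentially freely and then separating $c$ afterwards'': with $a_1,a_2,a_3$ fixed, the free parameters $(r_1,r_2,r_3,k)$ and the orbit coordinates $\bigl(\mu(\{1,2\}),\mu(\{1,3\}),\mu(\{2,3\}),\mu(\{1,2,3\})\bigr)$ form two $4$-parameter families, and the coefficient (the number of hubs $U$ with $\vert U\vert=k$, $\vert U\cap A_i\vert=r_i$) vanishes unless the inclusion--exclusion inequalities
\[
r_i+r_j-k\leq\mu(\{i,j\}),\qquad r_1+r_2+r_3-k\leq\mu(\{1,2\})+\mu(\{1,3\})+\mu(\{2,3\})-\mu(\{1,2,3\})
\]
hold. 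The triple intersection is separated only through the signed combination in the last inequality --- it never appears ``with the pairwise data held fixed'' --- and the triangular structure comes from the fact that both $4\times 4$ matrices in this system of inequalities are invertible. Second, the nonvanishing of the diagonal entries is not a rank count but an explicit witness: for the extremal parameter values one checks that $U=(A_1\cap A_2)\cup(A_1\cap A_3)\cup(A_2\cap A_3)$ satisfies all four constraints, so the corresponding coefficient is at least $1$. Without these two steps the argument is a plausible programme rather than a proof.
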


\begin{proof}
Let $h=\varepsilon^{-1}\circ m \circ \left( \varepsilon\otimes\varepsilon\right)$ be the only element of $\com{2}$ that maps any element $p_{A}\otimes p_{B}$ of $\dign^{\otimes 2}$ to $p_A$ if $A=B$ and to $0$ otherwise.

\noindent Given any three functions $E_{a_1,k,r_1}, E_{a_2,k,r_2}, E_{k,a_3,r3}$, the function
\[F_{a_1,a_2,a3,r_1,r_2,r_3,k}=E_{k,a_3,r3}\circ h \circ \left( E_{a_1,k,r_1}\otimes E_{a_2,k,r_2}\right)\]
is a linear combination of the functions $E_\mu$, where $\mu$ runs over the incidence function of order three defined by $\mu\left(\{i\}\right)=a_i$ for $i=1,2,3$. We shall show that for a given triple $\left(a_1,a_2,a_3\right)$ of numbers between $0$ and $n$, this linear system gives the $E_{\mu}$.

Given a member $\left(A_1,A_2,A_3\right)$ of the orbit represented by $\mu$, the coefficient of the decomposition of $F_{a_1,a_2,a3,r_1,r_2,r_3,k} $ on $E_\mu$ is the number of subsets $U$ of $\groundset$ such that~:
\begin{eqnarray*}
\vert U\vert&=&k\\
\vert U\cap A_1\vert&=&r_1\\
\vert U\cap A_2\vert&=&r_2\\
\vert U\cap A_3\vert&=&r_3\\
\end{eqnarray*}

If such a set $U$ exists, one must have~:
\[\vert U\cap\left(A_1\cup A_2\right)\vert= \vert U\cap A_1\vert + \vert U\cap A_2\vert - \vert U\cap A_1 \cap A_2\vert \leq \vert U\vert\]
that is to say~:
\[r_1+r_2-k\leq \vert U\cap A_1 \cap A_2\vert \leq \vert A_1 \cap A_2\vert=\mu\left(\{1,2\}\right)\]

Likewise, we have~: 
\[r_1+r_3-k\leq \mu\left(\{1,3\}\right)\]
\[r_2+r_3-k\leq \mu\left(\{2,3\}\right)\]

In the same manner, considering $ U\cap\left(A_1\cup A_2\cup A_3\right) $, one can see that~:
\[\vert U\cap A_1\vert + \vert U\cap A_2\vert + \vert U\cap A_3\vert- \vert U\cap A_1 \cap A_2\vert - \vert U\cap A_1 \cap A_3\vert - \vert U\cap A_2 \cap A_3\vert 
+ \vert U\cap A_1 \cap A_2 \cap A_3 \vert\leq \vert U\vert\]

That is~:
\begin{eqnarray*}
r_1+r_2+r_3-k&\leq &\vert U\cap A_1\cap A_2\cap A_3^c\vert+\vert U\cap A_1\cap A_2^c\cap A_3\vert+\vert U\cap A_1^c\cap A_2\cap A_3\vert\\
&&+2\vert U\cap A_1\cap A_2\cap A_3\vert\\
& \leq & \vert A_1\cap A_2\cap A_3^c\vert+\vert A_1\cap A_2^c\cap A_3\vert+\vert A_1^c\cap A_2\cap A_3\vert+2\vert A_1\cap A_2\cap A_3\vert\\
& \leq & \mu\left(\{1,2\}\right)+\mu\left(\{1,3\}\right)+ \mu\left(\{2,3\}\right) - \mu\left(\{1,2,3\}\right)
\end{eqnarray*}

We summarize these inequalities in matrix form~:
\[ \left[\begin{array}{cccc} 1&1&0&-1\\0&1&1&-1\\1&0&1&-1\\1&1&1&-1\\ 
\end{array} \right]
\cdot
\left[\begin{array}{c}r_1\\r_2 \\r_3\\k\\ 
\end{array} \right]
\leq
\left[\begin{array}{cccc} 1&0&0&0\\0&1&0&0\\0&0&1&0\\1&1&1&-1\\ 
\end{array} \right]
\cdot
\left[\begin{array}{c}\mu\left(\{1,2\}\right)\\ \mu\left(\{1,3\}\right)\\ \mu\left(\{2,3\}\right) \\ \mu\left(\{1,2,3\}\right)
\end{array} \right]\]
It can be checked that the two matrices are invertible.
Thus, given $a_1,a_2,a_3$, if we consider two convenient orders on $r_1,r_2,r_3,k$ and on the incidence functions $\mu$ such that $\mu\left(\{i\}\right)=a_i$, the expression of the $F_{a_1,a_2,a_3,r_1,r_2,r_3,k}$ on the $E_\mu$ is triangular\footnote{to be exact, a subsystem is triangular}, and 
one can see that this system is invertible, that is to say the diagonal elements are non-zero,
by considering $U=\left(A_1\cap A_2\right)\cup\left(A_2\cap A_3\right)\cup\left(A_1\cap A_3\right)$ with the previous notations.

\end{proof}

We would like to use the same strategy to obtain $\com{3}$, using linear combinations of functions such as $u\circ \left(v\otimes id \right)$ where $u$ and $v$ range over $\com{2}$, but this is not possible. In fact, the dimension of the linear hull of such functions is bounded above by three times the squared dimension of $\com{2}$, that is $3\binom{n+7}{7}^2$, whereas the dimension of $\com{3}$ is $\binom{n+15}{15}$. Hence, if $n$ is large enough :
\[3\textrm{dim}\left(\com{2}\right)^2<\textrm{dim}\ \com{3}\]

Theorem \ref{com2} has an interesting consequence : for any generalised orbit algebra $\mathcal{D}$, we have $\com{2}\left(\mathcal{D}\times\mathcal{D}\right)=\mathcal{D}$. We can define $\mathcal{D}$ to be a subalgebra of functions of $\dign$ to itself, via multiplication. Theorem \ref{com2} asserts that this algebra is closed under transposition, and thus completely determined by its commutant. 

\section{Reconstruction problems}

Generalised orbit algebras are strongly related to reconstruction conjectures in graph theory. We state the vertex and edge reconstruction conjectures separately, as they illustrate two distinct points of view on the possible use of generalised orbit algebras in reconstruction problems. We first recall the necessary definitions, the interested reader being invited to refer to \cite{bondy1} for a survey of reconstruction results, and to \cite{bondy2} for an introduction to graph theory.

Let $V=\{1,\ldots,\nvert\}$, and let $E$ be the set of all subsets of $V$ of size two. We shall call $V$ the set of \emph{vertices}, and \emph{edges} the elements of $E$. We define a \emph{graph} to be a subset of $E$, and call $E$ the \emph{complete graph}, whereas the empty set will be the \emph{empty graph}. We also define a \emph{subgraph} of a graph $G$ to be any subset of $G$. The elements of $G$ will be called the edges of $G$, an edge $\{i,j\}$ of $G$ being said \emph{incident} to the vertices $i$ and $j$.

For example, if $\nvert=3$, we can represent the complete graph on three vertices, one of its subgraphs, and an edge incident to the vertices $1$ and $2$. 
\\ \begin{center}\includegraphics[scale=0.75]{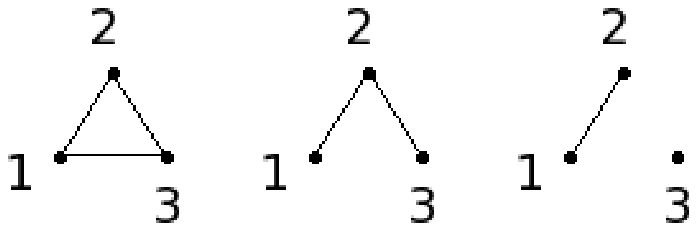}\end{center} 
The group $\permsv$ of permutations of the set $V$ acts on $E$ with the natural action $\sigma\cdot\{i,j\}=\{\sigma\cdot i,\sigma\cdot j\}$. Likewise, we can lift this action of $\permsv$ on edges to the set of all graphs. We will say that two graphs $G$ and $H$ are \emph{isomorphic} if they are in the same orbit for this action. We shall denote by $[G]$ the orbit of the graph $G$.
For example, if $\nvert=3$, the permutation $(1 2)$ acts as follows~:
\\ \begin{center}\includegraphics[scale=0.75]{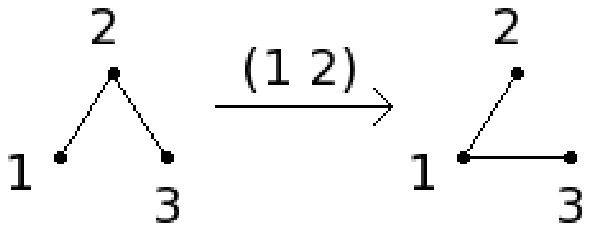}\end{center} 
Together with the following graph, these two graphs form an orbit of $\permsv$~:
\\ \begin{center}\includegraphics[scale=0.75]{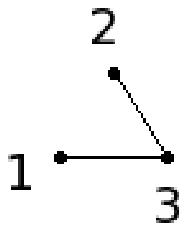}\end{center}

\subsection{Vertex-reconstruction conjecture}

For a graph $G$, and a vertex $v$, we define the \emph{vertex-deleted subgraph} $G-v$ as the subgraph of $G$ that contains every edge of $G$ not incident to $v$.
Two graphs $G$ and $H$ on the same vertex set $V$ are called \emph{hypomorphic} if, for each $v\in V$, the subgraphs $H-v$ and $G-v$ are isomorphic. A \emph{reconstruction} of a graph $G$ is a graph that is hypomorphic to $G$. A graph $G$ is said to be \emph{reconstructible} if every reconstruction of $G$ is isomorphic to $G$. 

\begin{conjecture}[Kelly-Ulam \cite{ulam}]\label{ulam} Every graph on at least three vertices\footnote{with our notations $f\geq 3$} is reconstructible.
\end{conjecture}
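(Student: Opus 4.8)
The plan is to run the counting strategy of Kelly and Lov\'asz inside the generalised orbit algebra attached to the action of $\permsv$ on the powerset of the edge set. One takes $\groundset = E$, so that a graph is a subset of $\groundset$ and the orbits of $\permsv \le \permse$ on $\allsets$ are exactly the isomorphism classes $[G]$; these orbits form a strongly regular partition whose inclusion numbers $\binom{\gorb{i}}{\gorb{j}}$ record how many labelled copies of one graph type lie inside a fixed representative of another. The deck of $G$ is the multiset $\{[G-v] : v \in V\}$, and the first goal is to show that this multiset determines, for every type $[H]$ using fewer than $\nvert$ vertices, the number $s(H,G)$ of subgraphs of $G$ isomorphic to $H$. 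This is Kelly's lemma: a copy of $H$ survives in $G-v$ exactly when $v$ is incident to none of its edges, so summing the subgraph counts across the deck multiplies $s(H,G)$ by the constant $\nvert$ minus the number of vertices used by $H$, which is nonzero under the size hypothesis.

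The second step is to phrase these counts purely in the orbit algebra and to move them between size layers. The containment Lemma of the introduction, relating $\binom{\gorb{i}}{\gorb{j}}$ to its complementary counterpart, lets one pass freely between counts of subgraphs and counts of containing supergraphs, while the derivation $\partial$ realises single-edge deletion and its injectivity and surjectivity (the Livingstone--Wagner behaviour established above) transfer count information between graphs differing by one edge. The two intersection-count Theorems of the introduction guarantee moreover that the finer pairwise and triple intersection profiles of the orbits are themselves invariants of the partition. Using these, I would assemble from the reconstructible numbers $s(H,G)$ the full vector of ``generalised orbit coordinates'' of $G$ in every size layer below $\binom{\nvert}{2}$; concretely, knowing $s(H,G)$ for all proper types is equivalent to knowing the projection of $p_{[G]}$ onto every bloc $\gorb{j}$ of strictly smaller size in the orbit algebra of $\permsv$.

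The final step is the passage from this vector of proper-subgraph counts back to the type $[G]$ itself. Following Lov\'asz, one converts subgraph counts into supergraph counts by complementation $\complement$ and then performs a M\"obius inversion over the containment order on types to isolate $[G]$. It would suffice to show that the vector $(s(H,G))$, taken over all types $[H]$ with fewer than $\nvert$ vertices, determines $[G]$ uniquely; this is the injectivity of the count-to-type map, and when the associated incidence matrix of ``labelled copies of $[H]$ inside a fixed graph of type $[G]$'' has full rank on the relevant sizes, the deck determines $[G]$ and the conjecture follows.

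The hard part, and the reason the conjecture is still open, is precisely this nonsingularity. The generalised orbit algebra framework explains \emph{why} so many invariants of $G$ are reconstructible --- closure under $\firstcom$, and under $\com{2}$ by Theorem~\ref{com2}, forces every intersection and inclusion count to be determined --- yet it supplies no inverse to the count-to-type map. Establishing injectivity of $[G] \mapsto (s(H,G))$ in full generality is the genuine combinatorial core of the problem, and the situations where the relevant matrix is provably of full rank, such as sufficiently dense, disconnected, or regular graphs, are exactly the situations where reconstruction is already known. I do not expect any purely algebraic closure property of the orbit algebra to break this barrier: the obstruction is the possible vanishing of determinants governed by the arithmetic of $\nvert$ rather than by any operator identity, so a complete proof would have to inject information beyond the intersection invariants captured here.
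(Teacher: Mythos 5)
You were asked to prove the Kelly--Ulam vertex reconstruction conjecture, which is a famous open problem: the paper does not prove it (it only states it as Conjecture~\ref{ulam}), and no proof is known. The only honest outcome is therefore exactly what you delivered --- a strategy sketch plus an explicit identification of the step nobody can carry out --- and your sketch is sound as far as it goes. Kelly's lemma does show that the deck determines $s(H,G)$ for every type $[H]$ with fewer than $\nvert$ non-isolated vertices, and with $\groundset=E$ the orbit algebra of $\permsv$ is closed under $\firstcom$ and under $\com{2}$ (Theorem~\ref{com2}), so all inclusion and intersection counts are indeed invariants of the partition. You also put your finger on the right obstruction: what is missing is the injectivity of the map $[G]\mapsto\left(s(H,G)\right)_{[H]}$, i.e.\ that the vector of proper-subgraph counts determines the isomorphism type, and no operator identity in the algebra supplies an inverse to this map. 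Your ``proof'' is thus not a proof, but the gap you name is the open problem itself, not an oversight.

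Your assessment also matches the paper's own treatment of the statement. The paper does not attempt a proof either: using Kelly's lemma it reformulates Conjecture~\ref{ulam} as Conjecture~\ref{ulamalg} --- that the subalgebra of $\galg$ generated by $\ideal$ is all of $\galg$ --- and then splits this into three claims: that the subalgebra generated by $\ideal$ is a generalised orbit algebra, that this generalised orbit algebra is an orbit algebra, and that its stabilizer is $\permsv$. Only the third is settled there; the first two are precisely the nonsingularity you isolated, and the paper's digraph example with $\nvert=4$ shows that the first claim can genuinely fail in a nearby setting, confirming your view that no purely formal closure argument can close the gap. One caution on your final paragraph: the ``dense graph'' cases you cite as provably reconstructible come from Lov\'asz's and M\"uller's theorems (Theorem~\ref{lovasz} and Corollary~\ref{muller} in the paper), which concern edge reconstruction and $\Gamma$-reconstruction rather than the vertex conjecture you were addressing, and the paper exhibits limit examples showing those bounds are sharp in the general orbit-algebra framework; so even that route cannot be pushed further without inputs beyond the intersection invariants, which is the paper's conclusion as well as yours.
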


For every edge $\{i,j\}$ of $E$, we define a variable $x_{i,j}$, and define the algebra $\mathcal{G}_\nvert$ to be the quotient of the real polynomials $\field[x_{\{1,2\}},\ldots,x_{\{\nvert-1,\nvert\}}]$ by the ideal generated by the elements~: \[x_{\{1,2\}}^2-x_{\{1,2\}},x_{\{1,3\}}^2-x_{\{1,3\}},\ldots,x_{\{\nvert-1,\nvert\}}^2-x_{\{\nvert-1,\nvert\}}\] We reproduce here the algebraic point of view of the first part, in the particular case of graphs, that is to say, when $\groundset=E$.
The real vector space $\galg$ have dimension $2^{\binom{\nvert}{2}}$, and the polynomials 
\[p_G=\prod_{\{u,v\}\in G} x_{\{u,v\}}\]
where $G$ is a graph, are a basis of $\galg$. We have seen that the group of permutations of $V$
$\permsv$ acts on $\galg$ as a group of algebra isomorphisms (subgroup of the group of permutations $\permse$). Consequently, we can consider the algebra of its invariants $\galg^{\permsv}$. It is clear that this is a vector space with basis
\[p_{[G]}=\sum_{G\in [G]} p_G\]
where $[G]$ runs over every isomorphism class of graphs.
If we define by linearity the evaluation of a polynomial $p_G$ on a graph $H$ to be : 
\[p_G(H)=\left\{\begin{array}{ll}1& \textrm{if}\ G\subseteq H\\0&\textrm{otherwise}\end{array}\right.\] 
then $p_{[G]}(H)$ is simply the number of subgraphs of $H$ that are isomorphic to $G$.

An \emph{isolated vertex} of a graph $G$ is a vertex which is incident to no edge of $G$, we denote by $\textrm{iv}\left( G\right)$ the number of isolated vertices in $G$. 
If we now consider the set $\ideal$ of polynomials $p_{[G]}$ in $\galg$, where $[G]$ runs over all the isomorphism classes of graphs with at least one isolated vertex, then $H$ is a reconstruction of $G$ if and only if $p(G)=p(H)$ for every polynomial $p$ in $\ideal$, for~:
\begin{lemma}[Kelly] If $F$ is a graph with at least one isolated vertex, then for every graph $G$~:
\[p_{[F]}\left(G\right)=\frac{1}{\textrm{iv}(F)}\sum_{v\in V}p_{[F]}\left(G-v\right)\]
\end{lemma}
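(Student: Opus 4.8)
The plan is to prove this identity by a double-counting argument, recognizing both sides as counting the same set of vertex–subgraph incidences. Recall from the paragraph preceding the statement that $p_{[F]}\left(G\right)$ equals the number of subgraphs $S$ of $G$ (that is, subsets $S\subseteq G$ of edges) with $S$ isomorphic to $F$. First I would pin down the combinatorial meaning of the right-hand side. Since $G-v$ is by definition the subgraph of $G$ consisting of exactly those edges not incident to $v$, a subgraph $S\subseteq G-v$ isomorphic to $F$ is precisely a subgraph $S\subseteq G$ with $S$ isomorphic to $F$ such that no edge of $S$ is incident to $v$; equivalently, such that $v$ is an isolated vertex of $S$. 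Consequently the sum $\sum_{v\in V}p_{[F]}\left(G-v\right)$ counts pairs $(v,S)$ with $S\subseteq G$, $S$ isomorphic to $F$, and $v$ isolated in $S$.

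The key step is then to reverse the order of summation, summing first over the admissible subgraphs $S$ and afterwards over the contributing vertices $v$. For a fixed subgraph $S\subseteq G$ with $S$ isomorphic to $F$, the vertices $v$ that contribute are exactly the isolated vertices of $S$, so their number is $\textrm{iv}\left(S\right)$. Because the number of isolated vertices is an isomorphism invariant, $\textrm{iv}\left(S\right)=\textrm{iv}\left(F\right)$ for every such $S$. Therefore
\[\sum_{v\in V}p_{[F]}\left(G-v\right)=\sum_{\substack{S\subseteq G\\ S\ \mathrm{iso.}\ F}}\textrm{iv}\left(S\right)=\textrm{iv}\left(F\right)\,p_{[F]}\left(G\right).\]
Dividing by $\textrm{iv}\left(F\right)$, which is nonzero precisely by the hypothesis that $F$ has at least one isolated vertex, yields the claimed formula.

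I do not anticipate any real obstacle: the entire content is the bookkeeping that identifies the condition $S\subseteq G-v$ with ``$v$ is isolated in $S$,'' together with the invariance of the isolated-vertex count under isomorphism. The only point demanding care is to keep the vertex set $V$ fixed throughout, so that all graphs, their subgraphs, and the vertex-deleted graphs $G-v$ live on the same ground set $V$; ``isolated vertex of $S$'' is then meaningful relative to $V$, and the deleted vertex $v$ becomes isolated in $G-v$ exactly because we retain $v$ as a now edge-free vertex rather than removing it from the ground set. This is also what guarantees $\textrm{iv}\left(F\right)\geq 1$ makes the division legitimate.
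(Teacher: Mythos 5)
Your proof is correct. The paper itself states Kelly's Lemma without proof, so there is nothing to compare against, but your double-counting argument --- identifying the condition $S\subseteq G-v$ with ``$v$ is an isolated vertex of $S$,'' reversing the order of summation, and using that $\textrm{iv}$ is an isomorphism invariant so each admissible $S$ is counted exactly $\textrm{iv}(F)\geq 1$ times --- is the standard and complete justification, and it fits the paper's conventions (all graphs are edge subsets on the fixed vertex set $V$, so isolated vertices are retained rather than deleted).
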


Knowing the values of the polynomials $p$ of $\ideal$ on $G$ allows us to know the list of the vertex deleted subgraphs $G-v$ up to isomorphism.
Thus, the generalised orbits of the partition of the set of graphs associated with the algebra generated by $\ideal$ in Theorem \ref{partition} are the classes of hypomorphic graphs.
We deduce that Conjecture \ref{ulam} is equivalent to~:
\begin{conjecture}\label{ulamalg}If $\nvert$ is at least three, then the subalgebra of $\galg$ generated by the polynomials of $\ideal$ is $\galg$ itself.
\end{conjecture}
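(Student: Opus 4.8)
The plan is to prove the conjecture by showing that the subalgebra $\mathcal{A}$ generated by $\ideal$ contains $p_{[G]}$ for \emph{every} isomorphism class of graphs, not merely for those with an isolated vertex. Since products of invariants are again invariant, $\mathcal{A}$ is contained in the orbit algebra $\galg^{\permsv}$, whose basis consists exactly of the $p_{[G]}$ ranging over all isomorphism classes; so the statement amounts to exhausting this invariant algebra, equivalently (by Theorem \ref{partition}, whose associated partition for $\mathcal{A}$ is the set of hypomorphy classes) to the assertion that each generalised orbit of $\mathcal{A}$ is a single isomorphism class, which for $\nvert\geq 3$ is Conjecture \ref{ulam}. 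The classes $[F]$ with an isolated vertex are handed to us as generators, so the whole difficulty is to climb from these to the isolated-vertex-free classes, and in particular to the dense graphs near the complete graph $p_E$.

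First I would fix the bookkeeping for multiplication inside $\mathcal{A}$. Using the idempotent basis and the isomorphism $\theta$ of Corollary \ref{isomorph}, a product $p_{[F_1]}\cdots p_{[F_k]}$ expands as a nonnegative integer combination of the $p_{[H]}$, where the coefficient on $[H]$ counts the $k$-tuples of subgraphs of a fixed representative $H$, one isomorphic to each $F_i$, whose union is all of $H$. The aim is to invert such relations: order the isomorphism classes by edge count, and within a fixed edge count by a refinement, and try to express a target $p_{[G]}$ as a combination of products of generators plus strictly lower-order terms already known to lie in $\mathcal{A}$, so that an induction on density recovers every class. Kelly's Lemma supplies the base mechanism, rewriting each generator's evaluation through the vertex-deleted deck, and the hope is a Lov\'asz--Nash--Williams style inclusion--exclusion that cancels the unwanted higher terms. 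I would calibrate this first for $\nvert=3,4$ to pin down the precise combinatorial identity that the inductive step requires.

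The hard part --- and where I expect the plan to stall --- is that this last inductive step \emph{is} the reconstruction problem, and two structural features of the present framework explain why no purely formal manipulation will close it. First, $\mathcal{A}$ is not visibly a generalised orbit algebra: complementation sends a graph with an isolated vertex to one without it (the isolated vertex becomes adjacent to everyone, and for $\nvert\geq 2$ a graph with an isolated vertex has no universal vertex, so its complement has none either), hence $\complement$ does not even preserve the generating set $\ideal$, and the clean closure results of Theorem \ref{generalised orbits}, Theorem \ref{com2} and the commutant $\firstcom$ are not directly available to force $\mathcal{A}$ up to an orbit algebra. Second, even granting closure under $\partial$, $\complement$ and $\com{2}$, the paper's concluding example shows that a generalised orbit algebra need not be an orbit algebra, so closure alone cannot certify $\mathcal{A}=\galg^{\permsv}$. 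In short, the algebraic reformulation faithfully repackages the gap between hypomorphy and isomorphism, but the decisive counting input that would collapse each hypomorphy class to a single isomorphism class is exactly the content of Kelly--Ulam and remains, to the author's knowledge, open; the most one can honestly offer is the reduction above together with a clear identification of where the genuine difficulty is concentrated.
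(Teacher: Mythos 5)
The statement you were handed is Conjecture \ref{ulamalg}: it is the paper's algebraic restatement of the Kelly--Ulam vertex-reconstruction conjecture, and the paper does not prove it --- it only establishes, via Kelly's Lemma and Theorem \ref{partition}, that it is equivalent to Conjecture \ref{ulam}, which is open. Your proposal carries out exactly that same reduction (the algebra generated by $\ideal$ consists of the polynomials constant on hypomorphy classes, so equality with the invariant algebra holds precisely when every hypomorphy class is a single isomorphism class), and then, correctly, declines to claim more: the step at which you say the plan stalls is precisely the reconstruction problem itself, so your assessment coincides with the paper's. Two further points in your favour: you silently repaired the conjecture's literal wording --- since every generator in $\ideal$ is $\permsv$-invariant, the generated algebra lies inside $\galg^{\permsv}\subsetneq\galg$, so ``is $\galg$ itself'' must be read as ``is the orbit algebra $\galg^{\permsv}$'', which is what the paper's subsequent three-part split (generalised orbit algebra, then orbit algebra, then stabilizer equal to $\permsv$) shows was intended; and your two structural obstructions (the generating set $\ideal$ is not stable under $\complement$, and, by the paper's concluding example, even a full generalised orbit algebra need not be an orbit algebra) are both sound, and are exactly the reason the paper splits the conjecture into those three parts instead of attacking it directly.
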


Thus we can split the Kelly-Ulam Conjecture \footnote{conjecture \ref{ulam}} into three parts~:

for $\nvert\geq 3$,
\begin{itemize}
\item The subalgebra of $\galg$ generated by the polynomials of $\ideal$ is a generalised orbit algebra.
\item This generalised orbit algebra is an orbit algebra.
\item The stabilizer of this algebra is $\permsv$.
\end{itemize}

We can easily address the third one, that is to say the group of permutations of the edges that leave invariant every polynomial of $\ideal$ is $\permsv$.
To show this, we note that a permutation of $\permse$ that leaves any polynomial of $\ideal$ invariant also leaves invariant the polynomial :
\[g=\sum_{i=1}^\nvert \prod_{\substack{j=1\\j\neq i}}^n x_{\{i,j\}}\]
because $\complement(g)$ is in $\ideal$.

A consequence of the definition of the action of $\permse$ on $\galg$ is that such permutations permute the 
\[s_i= \prod_{\substack{j=1\\j\neq i}}^n x_{\{i,j\}}\]
This yields a one to one correspondence between the permutations of $E$ that leave $g$ (i.e. $\ideal$) invariant, and the permutations of $V$. 

Observe that if the second point is true the generalised orbit algebra should be generated by $g$ \footnote{As the intersection of two generalised orbit algebras is a generalised orbit algebra, we may consider the generalised orbit algebra generated by $g$ to be the smallest generalised orbit algebra containing $g$.}.
It is clear that the vertex reconstruction conjecture implies the second point. But one can also show :

\begin{theorem} If the vertex reconstruction conjecture is true, then the generalised orbit algebra generated by $g$ is the orbit algebra of graphs.
\end{theorem}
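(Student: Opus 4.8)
Write $\mathcal{D}$ for the generalised orbit algebra generated by $g$ and $\mathcal{O}=\galg^{\permsv}$ for the orbit algebra of graphs. Since $g=p_{[K_{1,\nvert-1}]}$ is $\permsv$-invariant (the group merely permutes the summands $s_i$) and $\mathcal{O}$ is itself a generalised orbit algebra, we get $\mathcal{D}\subseteq\mathcal{O}$ for free; the whole content is the reverse inclusion. The plan is to use the vertex reconstruction conjecture only at the very end, through the equivalence established above: assuming it, the subalgebra generated by $\ideal$ is exactly $\mathcal{O}$. As $\mathcal{D}$ is an algebra containing $1$, it is then enough to prove the single inclusion $\ideal\subseteq\mathcal{D}$, for then $\mathcal{O}=\langle\ideal\rangle\subseteq\mathcal{D}\subseteq\mathcal{O}$.

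The observation that makes $\ideal\subseteq\mathcal{D}$ tractable is that $\mathcal{D}$ is closed under the \emph{vertex-deletion operator} $\delta$ given on the basis by $\delta(p_A)=\sum_{v\in V}p_{A-v}$, where $A-v$ is the set of edges of $A$ not incident to $v$. Indeed, the map $h_\cap:p_S\otimes p_T\mapsto p_{S\cap T}$ is $\permse$-equivariant and hence lies in $\com{2}$ (it is a finite nonnegative sum of the basis maps $E_\mu$, one incidence function for each triple of sizes $|S|,|T|,|S\cap T|$), and $\complement(g)=\sum_v p_{K^{(\hat v)}}$ lies in $\mathcal{D}$, where $K^{(\hat v)}$ denotes the edges avoiding $v$. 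Since every generalised orbit algebra is closed under $\com{2}$ by Theorem \ref{com2}, and $K^{(\hat v)}\cap A=A-v$, we obtain $\delta(p)=h_\cap(\complement(g)\otimes p)\in\mathcal{D}$ for all $p\in\mathcal{D}$. This is precisely the operator behind Kelly's lemma, and it gives $\mathcal{D}$ access to the whole deck, rather than only to the degree-sequence data that the linear operators $\partial^k$ and $E_{k,l,r}$ can see.

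With $\delta$, $\partial$, complementation and multiplication all at hand, I would generate $\ideal$ starting from the two footholds $\complement(g)=p_{[K_{\nvert-1}+K_1]}$, the densest isomorphism class with an isolated vertex, and $\partial(g)=p_{[K_{1,\nvert-2}+K_1]}$, which plays the role of the star polynomial of order $\nvert-1$ carried on the isolated vertex. Identifying graphs with an isolated vertex with graphs on $\nvert-1$ vertices, one checks that $\partial$ and multiplication restrict to the corresponding order-$(\nvert-1)$ operations while $\delta$ supplies the vertex deletions; the argument then proceeds by induction on $\nvert$, the inductive hypothesis being the theorem itself in order $\nvert-1$ (legitimate, since the conjecture is assumed at every size). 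The base cases $\nvert\le 4$ are immediate, since there the orbit sums of a fixed number of edges that have an isolated vertex are reached one at a time by $\partial$.

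The main obstacle is exactly the separation of distinct orbit sums $p_{[F]}$ of the same number of edges: operators linear in $g$ resolve only degree-sequence data and merge such classes, and order-$\nvert$ complementation does \emph{not} restrict to complementation in order $\nvert-1$, since it fills in every edge at the deleted vertex. Both difficulties are to be overcome using the multiplicative, i.e. $\com{2}$, closure: iterating $\delta$ together with the contractions $h_\cap(q\otimes-)$ for $q\in\mathcal{D}$ should produce, at each edge count, enough independent invariants to invert a triangular system over the isomorphism classes possessing an isolated vertex, in the same spirit as the triangular systems appearing in the proof of Theorem \ref{com2} and in the generation of $\firstcom$. Carrying out this inversion cleanly---equivalently, simulating the order-$(\nvert-1)$ complementation inside $\mathcal{D}$---is where the real work lies; once it is done, $\ideal\subseteq\mathcal{D}$ follows, and the conjecture closes the argument.
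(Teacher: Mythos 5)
Your reduction is sound as far as it goes: $\mathcal{D}\subseteq\galg^{\permsv}$ is immediate, the conjecture in the form of Conjecture \ref{ulamalg} reduces everything to $\ideal\subseteq\mathcal{D}$, and your construction of the vertex-deletion operator $\delta(p_A)=\sum_{v}p_{A-v}$ as $h_\cap\left(\complement(g)\otimes\,\cdot\,\right)$ with $h_\cap\in\com{2}$ is a correct and genuinely nice use of Theorem \ref{com2}. But the proof stops exactly where the theorem starts: you never produce the individual polynomials $p_{[F]}$ for isomorphism classes $[F]$ with an isolated vertex, and you say yourself that separating distinct classes with the same edge count, and the failure of order-$\nvert$ complementation to restrict to order-$(\nvert-1)$ complementation, is ``where the real work lies.'' Since the order-$(\nvert-1)$ generalised orbit algebra is generated from the star polynomial precisely by using order-$(\nvert-1)$ complementation, the induction on $\nvert$ you propose cannot even be set up until that simulation is carried out, and you give no argument that the ``triangular system'' of invariants you hope to invert is actually invertible. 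As it stands this is a plan with an acknowledged hole at its centre, not a proof.

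The paper avoids this hole by letting the reconstruction conjecture do the separation for you. It inducts on the number $t+1$ of non-isolated vertices (within the fixed order $\nvert$), not on $\nvert$: given all $p_{[H]}$ with at most $t$ non-isolated vertices, Conjecture \ref{ulamalg} applied with $f=t+1$ hands you a polynomial $r_{[G]}$ in $\mathcal{D}$ agreeing with $p_{[G]}$ on every graph with at most $t+1$ non-isolated vertices; so $r_{[G]}-p_{[G]}$ is supported on classes with at least $t+2$ non-isolated vertices. The only work left is to kill that error, which is done by a projection: multiply $\varepsilon(r_{[G]})$ by $\varepsilon\circ l(p_{t+1})$, where $p_{t+1}$ is the polynomial of $(t+1)$-cliques, which acts diagonally in the idempotent basis with eigenvalue $\binom{\textrm{iv}(H)}{\nvert-t-1}$, vanishing exactly on graphs with too few isolated vertices. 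No separation of same-size classes ever has to be engineered by hand from $\partial$, $\complement$ and $m$ --- the conjecture supplies it at each level $t+1$. If you want to salvage your approach, you should use the conjecture inside the induction in this way rather than only ``at the very end.''
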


\begin{proof}
We prove that for every type of graph $[G]$, $p_{[G]}$ is in the generalised orbit algebra generated by $g$, by induction on the number of non-isolated vertices of $G$.
We have seen that $p_{\emptyset}=1$ is in every generalised orbit algebra, just like the polynomial associated with graphs of size one, that is to say edges. Now if we consider an isomorphism class of graph $[G]$ with $t+1$ non-isolated vertices ($t\geq 2$), we know by the induction hypothesis that the polynomials $p_{[H]}$, where $H$ is a graph with at most $t$ non-isolated vertices, are elements of the generalised orbit algebra generated by $g$. By the reformulation of Conjecture \ref{ulam} as Conjecture \ref{ulamalg} (with $f=t+1$), there exists a polynomial $r_{[G]}$ in the generalised orbit algebra generated by $g$, that takes the same value as $p_{[G]}$ on every graph with at most $t+1$ non-isolated vertices.
Observe that $r_{[G]}$ is equal to $p_{[G]}$ plus a sum of $\alpha_{[H]}p_{[H]}$, where $[H]$ runs through the isomorphism classes of graphs with at least $t+2$ non-isolated vertices. We conclude by eliminating these terms as follows~:
\begin{itemize}
\item First we generate the polynomial $p_{t+1}$ corresponding to the cliques of size $t+1$ as a member of the generalised orbit algebra generated by $g$.
\item Then we note that for any graph $H$ with less than $t+1$ non isolated vertices~:
\[\varepsilon_H \cdot \left(\varepsilon\circ l(p_{t+1})\right)=\binom{\textrm{iv}(H)}{\nvert-t-1}\varepsilon_H\]
and that the left hand side is otherwise zero.
\item Finally, we consider $\varepsilon^{-1}\left(\varepsilon(r_G)\cdot\left(\varepsilon\circ l(p_{t+1})\right)\right)$. 
\end{itemize}
\end{proof}  

As infinite families of non-reconstructible digraphs exist (see \cite{stockmeyer}), we briefly look at what happens in this case. Instead of considering $\groundset$ to be the set of all two element subsets of $\{1,\ldots,\nvert\}$, we consider it to be the set $P$ of ordered pairs of distinct elements. The group of permutations $\permsv$ acts on $P$ by $\sigma\cdot(i,j)=(\sigma\cdot i,\sigma\cdot j)$.
One can then ask whether the polynomials corresponding to isomorphism classes of digraphs with at least one isolated vertex generate the whole orbit algebra.
\begin{itemize}
\item if $\nvert=3$, the algebra generated by the polynomials corresponding to types of digraphs with at least one isolated vertex is a generalised orbit algebra, and an orbit algebra, but the stabilizer also switches edges directions.
\item if $\nvert=4$, the algebra is not a generalised orbit algebra. Consider the following two hypomorphic digraphs: there are no other digraph hypomorphic to them~:
\begin{center}\includegraphics[scale=0.75]{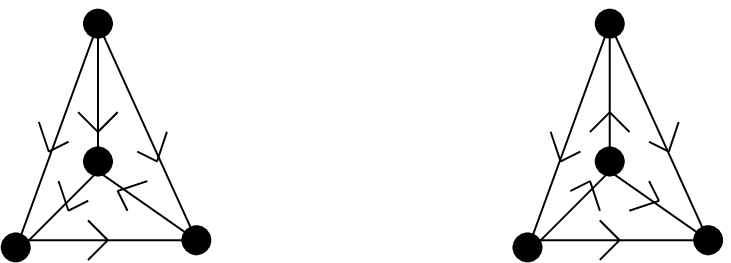}\end{center}
Both of these digraphs contain the second of the following two digraphs, but not the first.
\begin{center}\includegraphics[scale=0.75]{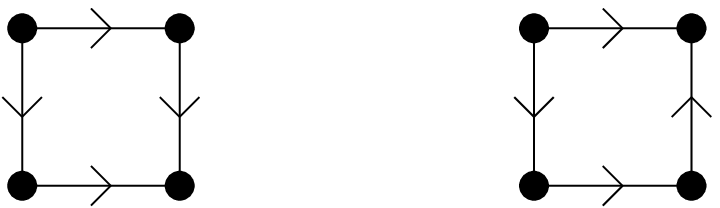}\end{center}  
Those last two digraphs are nevertheless hypomorphic, and there are no other digraph hypomorphic to them. We see here that the number of subdigraphs with a given deck is not reconstructible, which implies that the algebra generated by the polynomials corresponding to types of digraphs with at least one isolated vertex is not a generalised orbit algebra
\end{itemize}

\subsection{Edge-reconstruction conjecture}

If $G$ is a graph, a graph $H$ is said to be an \emph{edge-reconstruction} of $G$ if there is a bijection $\phi$ between the edges of $G$ and the edges of $H$ such that for every edge $u$ of $G$, $G-u$ and $H-\phi(u)$ are isomorphic. The graph $G$ is said to be \emph{edge-reconstructible} if every edge-reconstruction of $G$ is isomorphic to $G$. 

\begin{conjecture}[Harary\cite{harary}]\label{edgerec} Every graph on at least four edges is edge-reconstructible.
\end{conjecture}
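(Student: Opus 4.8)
The plan is to reproduce, for edges, the algebraic reformulation already carried out for the vertex case. For a graph $G$ and an edge $u\in G$, write $G-u$ for the edge-deleted subgraph and call the multiset $\{[G-u]\ :\ u\in G\}$ the \emph{edge-deck} of $G$. The role played by isolated vertices in the vertex case is now taken by having strictly fewer edges: the Nash--Williams counting identity $\sum_{u\in G}p_{[F]}(G-u)=(\vert G\vert-\vert F\vert)\,p_{[F]}(G)$ shows that, for every isomorphism class $[F]$ with fewer edges than $G$, the value $p_{[F]}(G)$ is determined by the edge-deck (the number of edges of $G$ being itself read off the deck). Hence the values on $G$ of all the polynomials $p_{[F]}$ with $[F]\neq[E]$ determine exactly the edge-deck, so that the generalised orbits of the partition attached by Theorem \ref{partition} to the subalgebra they generate are precisely the classes of graphs sharing a common edge-deck. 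Conjecture \ref{edgerec} then splits, in exact parallel with Conjecture \ref{ulamalg}, into three statements: that this subalgebra is a generalised orbit algebra, that it is an orbit algebra, and that its stabiliser in $\permse$ is $\permsv$.

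Next I would bring in the counting machinery the paper has been assembling. Every generalised orbit algebra is closed under $\firstcom$ and under multiplication, and Theorem \ref{com2} upgrades this to closure under all of $\com{2}$; the resulting control over how subgraph counts across distinct generalised orbits determine one another is exactly the leverage the Lov\'asz--M\"uller argument requires. That argument is at bottom a rank/counting estimate: a graph $G$ with $\vert G\vert$ edges is edge-reconstructible as soon as the number of its labelled copies cannot be matched by the spurious copies forced on a hypomorphic competitor, a condition M\"uller reduced to $2^{\vert G\vert-1}>\nvert!$. Within the present framework this inequality should reappear as a non-degeneracy condition on the relevant operator of $\com{2}$, with $\complement$ and $\partial$ supplying the recursion that drives the count, just as they did in recovering the Livingstone--Wagner bound.

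The decisive obstacle, however, is intrinsic to the method rather than merely technical. The counting strategy certifies edge-reconstructibility only for graphs sufficiently dense relative to $\nvert$, and it cannot be pushed to cover every graph on at least four edges: the limit cases announced in the introduction show that M\"uller's bound is already optimal for reconstruction under a general group action, so no purely enumerative refinement inside $\com{k}$ can close the gap. Consequently the generalised-orbit-algebra framework, while it cleanly explains and marginally extends the known partial results, does not by itself settle Conjecture \ref{edgerec}. What it does accomplish is to localise the difficulty in the first of the three split statements---whether the edge-deck algebra is a generalised orbit algebra at all---which is precisely where any genuinely new idea, beyond counting, would have to enter.
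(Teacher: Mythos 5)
There is nothing to compare here in the usual sense: the statement you were asked to prove is Harary's Edge-Reconstruction Conjecture, which the paper states \emph{as a conjecture} and does not prove. Your proposal, to its credit, does not pretend otherwise --- it reconstructs the paper's programme (reformulate reconstruction in the orbit-algebra language via Kelly's lemma, exploit closure under $\firstcom$ and $\com{2}$ to derive the Lov\'asz bound $\vert G\vert > \tfrac{1}{2}\binom{\nvert}{2}$ and the M\"uller bound $2^{\vert G\vert-1} > \nvert!$, and then observe that the limit-case examples block any further purely enumerative improvement) and then concedes that the conjecture is not settled. This matches the paper's own treatment almost exactly, including the closing observation that one cannot improve Theorems \ref{lovasz} and \ref{muller} by polynomial relations among the coefficients alone. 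Two small points of divergence from the source: the counting identity you attribute to Nash--Williams is the one the paper states as Kelly's lemma in the $\Gamma$-reconstruction setting, and the three-way splitting into ``generalised orbit algebra / orbit algebra / stabiliser'' is carried out in the paper only for the vertex conjecture (Conjecture \ref{ulamalg}); your transposition of it to the edge case is natural but is your own addition, and the paper's actual route to the edge conjecture goes through the $\Gamma$-reconstruction formalism with $\Gamma=\permsv$ acting on $\groundset=E$ rather than through such a splitting.

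To be explicit about status: since neither the paper nor your proposal proves the statement, your text cannot be accepted as a proof of it; it is an accurate survey of the partial results the paper obtains. If the intended target was one of those partial results (Theorem \ref{lovasz}, Theorem \ref{mull}, or Corollary \ref{muller}), your sketch points at the right mechanisms --- the vanishing of $E_{k,k,0}$ for $k>\frac{n}{2}$ in the Lov\'asz case, and the inequality $2^{\vert A\vert-\vert S\vert-1}\leq\binom{S^c}{A^c}$ in the M\"uller case --- but would need the actual computations with the $E_{k,l,r}$ and the coefficient matrices spelled out to stand as a proof of those.
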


We take the opportunity here to emphasize the structure coefficients of a generalised orbit algebra. Recall that there is a partition of $\allsets$\footnote{called system of generalised orbits} associated with any generalised orbit algebra by Theorem \ref{partition}, that is both a basis of the generalised orbit algebra, and the list of sets on which the polynomials in the generalised orbit algebra take constant values. We deduce that for any generalised orbit algebra $\mathcal{D}$ whose system of generalised orbits is 
$\gorb{1},\ldots,\gorb{s}$, one can define, for any two generalised orbits $\gorb{i}$ and $\gorb{j}$, and any member $A$ of $\gorb{i}$~: 
\[\binom{\gorb{i}}{\gorb{j}}=\left\vert\left\{B\subseteq A\ s.t.\ B\in \gorb{j}\right\}\right\vert\]
as this number is independent of the choice of $A$ in $\gorb{i}$ (we might use the different notations $\binom{A}{\gorb{j}}$ or even $\binom{A}{B}$ for this number). These coefficients encode structure of the generalised orbit algebra $\mathcal{D}$ in the following sense~:
\begin{itemize}
\item We consider the list of the generalised orbits of the generalised orbit algebra to be such that the matrix $\complement$ is known, for example with $p_{\gorb{i}^c}:=\complement\left(p_{\gorb{i}}\right)=p_{\gorb{s-i}}$.
\item The matrix of $\partial$ with respect to the basis $p_{\gorb{1}},\ldots,p_{\gorb{s}}$ has coefficient $[\gorb{i},\gorb{j}]$ equal to $\binom{\gorb{i}^c}{\gorb{j}^c}$ if $\sharp \gorb{i}=\sharp \gorb{j} -1$ and $0$ otherwise.  
\item The multiplication rule is given by 
\[p_{\gorb{i}}\cdot p_{\gorb{j}}= \sum_{k=1}^s \binom{\gorb{k}}{\gorb{i},\gorb{j}}p_{\gorb{k}}\]
where the coefficient $\binom{\gorb{k}}{\gorb{i},\gorb{j}}$ can be computed via M\"obius inversion~:
\[\binom{\gorb{k}}{\gorb{i},\gorb{j}}=\sum_{l=1}^s (-1)^{\sharp \gorb{k} - \sharp \gorb{l}}\binom{\gorb{k}}{\gorb{l}}\binom{\gorb{l}}{\gorb{i}}\binom{\gorb{l}}{\gorb{j}}\]
\end{itemize}

For any generalised orbit algebra $\mathcal{D}$, we define a \emph{coefficient matrix} to be a matrix indexed by a list of generalised orbits $\left(\gorb{1},\ldots,\gorb{s}\right)$ of $\mathcal{D}$, and with coefficient $[i,j]$ equal to $\binom{\gorb{i}}{\gorb{j}}$.

A Gap (\cite{gap}) experiment shows that an orbit algebra of order smaller than 9 is uniquely defined up to conjugation by its coefficient matrix. It is then natural to look at which matrices are indeed coefficient matrices of a generalised orbit algebra. We can easily state a number of relations between coefficients. For example, a simple counting argument gives~:
\[\binom{\groundset}{\gorb{i}}\binom{\gorb{i}^c}{\gorb{j}^c}=\binom{\groundset}{\gorb{j}}\binom{\gorb{j}}{\gorb{i}}\] 

We can also see that the coefficient matrix $\mathcal{M}$ is the matrix of $\complement\circ l\circ \complement$. We can now reformulate Lemma \ref{lpuiss} in its original form (\cite{mnukhin2}), that is to say, for every nonzero integer $m$~:
\begin{eqnarray}\label{mnukhin}
\mathcal{M}^m_{[i,j]}=m^{\sharp \gorb{i} - \sharp \gorb{j}}\mathcal{M}_{[i,j]}
\end{eqnarray}
With the study of the $\com{k}$, we have already defined many relations~:
\begin{itemize}
\item relations that result from the definition of $\firstcom$ as the algebra generated by $\partial$ and $\complement$.
\item composition relations between the elements of the basis of $\firstcom$.
\item linear combinations between the $u\circ m\circ(v\otimes w)$, where $u,v,w$ runs through the $E_{k,l,r}$ (i.e. a basis of $\firstcom$).
\item although we do not know how to generate $\com{k}$ if $k\geq 3$, some linear relations might arise between the functions that we do know how to generate.
\end{itemize}

As we can see, the framework of the commutants ($\firstcom$, $\com{k}$) generates ex-nihilo some polynomial relations between the coefficients of a generalised orbit algebra. We might ask whether they can be useful towards reconstruction problems.
First, we follow \cite{cameron} and rephrase the Edge Reconstruction Conjecture \ref{edgerec} in the more general framework of orbit algebras.

Given a group $\Gamma$ of permutations of $\groundset$, we say that two subsets $A$ and $B$ of $\groundset$ are \emph{$\Gamma$-isomorphic} if there is an element $\sigma$ of $\Gamma$ such that $\sigma\cdot A=B$. 
If $A$ and $B$ are two subsets of $\groundset$ and there is a bijection $\phi$ from $A$ to $B$ such that for every $e$ in $A$, the subsets $A-e$ and $B-\phi(e)$ are $\Gamma$-isomorphic, we say that $B$ is a \emph{$\Gamma$-reconstruction} of $A$. $A$ is said to be \emph{$\Gamma$-reconstructible} if every $\Gamma$-reconstruction of $A$ is isomorphic to $A$. We would like to know which sets are $\Gamma$-reconstructible ? 
There is a version of Kelly's Lemma for this problem~:
\begin{lemma}[Kelly]
If $A$ and $C$ are two subsets of $\groundset$ such that $\vert C\vert<\vert A\vert$, then~:
\[\binom{A}{C}=\frac{1}{\vert A\vert -\vert C\vert}\sum_{e\in A} \binom{A-e}{C}\] 
\end{lemma}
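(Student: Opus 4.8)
The plan is to establish the identity by a double-counting argument of the classical Kelly type. Consider the set of pairs
\[
\mathcal{T}=\left\{(B,e)\ :\ e\in A,\ B\subseteq A\setminus\{e\},\ B\ \Gamma\text{-isomorphic to}\ C\right\}.
\]
The left-hand side of the claimed formula will arise from organising $\mathcal{T}$ according to its first coordinate, and the right-hand side from organising it according to its second coordinate; equating the two counts and dividing by $\vert A\vert-\vert C\vert$ will give the result.

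First I would count $\mathcal{T}$ by fixing $B$. Every set $B$ that occurs is a subset of $A$ which is $\Gamma$-isomorphic to $C$, and by definition there are exactly $\binom{A}{C}$ such subsets. The essential observation here is that $\Gamma$-isomorphic sets have the same cardinality, so $\vert B\vert=\vert C\vert$; consequently, for a fixed admissible $B$, the number of valid choices of $e$ is $\vert A\setminus B\vert=\vert A\vert-\vert C\vert$. Hence $\vert\mathcal{T}\vert=\binom{A}{C}\,\left(\vert A\vert-\vert C\vert\right)$.

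Next I would count $\mathcal{T}$ by fixing $e$. For each $e\in A$, the pairs of $\mathcal{T}$ with that second coordinate are precisely the subsets $B$ of $A-e$ that are $\Gamma$-isomorphic to $C$, and there are $\binom{A-e}{C}$ of these by definition. Summing over the $\vert A\vert$ elements of $A$ gives $\vert\mathcal{T}\vert=\sum_{e\in A}\binom{A-e}{C}$. Equating the two expressions for $\vert\mathcal{T}\vert$ and dividing by $\vert A\vert-\vert C\vert$, which is strictly positive because $\vert C\vert<\vert A\vert$, yields the stated formula.

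The argument is elementary, and the only point requiring care—the main, and rather minor, obstacle—is the observation that all members of a single generalised orbit, equivalently all mutually $\Gamma$-isomorphic subsets, share the same cardinality; this is exactly what makes $\vert A\setminus B\vert$ independent of the particular $B$, and it is precisely the property recorded earlier, namely that the generalised orbits refine those of $\dign^{\perms}$. Everything else reduces to the classical Kelly counting.
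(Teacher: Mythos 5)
Your double count is correct, and it is the classical argument this lemma is named for: counting the pairs $(B,e)$ with $B\subseteq A\setminus\{e\}$ in the orbit of $C$ first by $B$ (giving $\binom{A}{C}(\vert A\vert-\vert C\vert)$, using that $\Gamma$-isomorphic sets have equal cardinality) and then by $e$ (giving $\sum_{e\in A}\binom{A-e}{C}$), with $\vert C\vert<\vert A\vert$ guaranteeing the divisor is positive. The paper states this lemma without proof, so there is no alternative argument to compare against; your write-up supplies exactly the intended justification.
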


It is then clear that requiring that two sets $A$ and $B$ satisfy the condition that $A-e$ is isomorphic to $B-\phi(e)$ for every element $e$ of $A$ is equivalent to requiring that, 
for every set $C$ with fewer than $\vert A\vert$ elements,
\[\binom{A}{C}=\binom{B}{C}\]
 
Note also that Kelly's Lemma apply for generalised orbit algebras. One can show that the relations stated in this result are exactly the same as the ones obtained in equation (\ref{mnukhin}), that is to say, a coefficient matrix satisfying one set of relations satisfies the other set.
 
We now look at the first relation obtained with the generalised orbit algebra structure. From equations (\ref{bascom1}) or (\ref{bascom2}), we deduce that for any generalised orbit algebra $\mathcal{D}$, and for any two generalised orbits $\gorb{1}$ and $\gorb{2}$ of $\mathcal{D}$ with sizes $k$ and $l$ respectively, the coefficient of the matrix of $E_{k,l,r}$ is~:
\[E_{k,l,r_{[\gorb{1},\gorb{2}]}}=\sum_{\bl{U}} (-1)^{\vert \bl{U}\vert -r}\binom{\vert \bl{U}\vert}{r}\binom{\gorb{1}}{\bl{U}}\binom{\bl{U}^c}{\gorb{2}^c}\]
where the sum runs over every generalised orbit $\bl{U}$ of $\mathcal{D}$.
Observe that this should be $0$ if $k+l-r>n$. Thus if $\gorb{1}$ and $\gorb{2}$ are two generalised orbits both of size $k>\frac{n}{2}$ such that for every generalised orbit $\bl{U}$ distinct from $\gorb{1}$ or $\gorb{2}$ we have $\binom{\gorb{1}}{\bl{U}}=\binom{\gorb{2}}{\bl{U}}$, then
\begin{eqnarray*}
E_{k,k,0_{[\gorb{1},\gorb{1}]}}-E_{k,k,0_{[\gorb{2},\gorb{1}]}}&=&\sum_{\bl{U}} (-1)^{\vert \bl{U}\vert}\left(\binom{\gorb{1}}{\bl{U}}-\binom{\gorb{2}}{\bl{U}}\right)\binom{\bl{U}^c}{\gorb{1}^c}\\
&=&(-1)^{\vert \gorb{1}\vert}
\end{eqnarray*}
Since $E_{k,k,0}$ is the zero mapping we have a contradiction.
We deduce that~:
\begin{theorem}[Lov\'asz \cite{lovasz}\cite{cameron}]\label{lovasz}
For every group $\Gamma$ of permutations of $\groundset$, if $A$ and $B$ are two subsets of $\groundset$ and there is a bijection $\phi$ from $A$ to $B$ such that for every $e$ in $A$, $A-e$ and $B-\phi(e)$ are $\Gamma$-isomorphic, and if the size of $A$ is bigger than $\frac{n}{2}$, then $A$ and $B$
are $\Gamma$-isomorphic.
\end{theorem}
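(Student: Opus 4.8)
The plan is to phrase the statement entirely inside the orbit algebra $\dign^\Gamma$, whose generalised orbits are exactly the orbits of $\Gamma$ acting on $\allsets$; two subsets of $\groundset$ are $\Gamma$-isomorphic precisely when they lie in a common generalised orbit. Accordingly I would let $\gorb{1}$ be the orbit of $A$ and $\gorb{2}$ the orbit of $B$, both of common size $k=\vert A\vert=\vert B\vert>\frac{n}{2}$, so that the goal reduces to showing $\gorb{1}=\gorb{2}$.

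First I would translate the hypomorphism hypothesis into a statement about the structure coefficients $\binom{\gorb{i}}{\gorb{j}}$. By the reformulation of Kelly's Lemma recorded above, the existence of the bijection $\phi$ with $A-e$ and $B-\phi(e)$ being $\Gamma$-isomorphic for all $e\in A$ is equivalent to $\binom{A}{C}=\binom{B}{C}$ for every set $C$ with $\vert C\vert<k$. Since this count depends only on the orbit of $C$, it reads $\binom{\gorb{1}}{\bl{U}}=\binom{\gorb{2}}{\bl{U}}$ for every generalised orbit $\bl{U}$ with $\sharp\bl{U}<k$. I would then upgrade this to \emph{every} orbit $\bl{U}$ distinct from $\gorb{1}$ and $\gorb{2}$: an orbit of size exceeding $k$ gives $0$ on both sides, and an orbit of size exactly $k$ other than $\gorb{1}$ or $\gorb{2}$ also gives $0$ on both sides, the only size-$k$ subset of a $k$-set being the set itself.

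Finally I would invoke the explicit formula for the matrix entries of $E_{k,k,0}$ coming from equations (\ref{bascom1}) and (\ref{bascom2}) and form the difference $E_{k,k,0_{[\gorb{1},\gorb{1}]}}-E_{k,k,0_{[\gorb{2},\gorb{1}]}}$. The agreement established above kills every summand except $\bl{U}=\gorb{1}$ and $\bl{U}=\gorb{2}$; evaluating $\binom{\gorb{1}}{\gorb{1}}=1$, $\binom{\gorb{2}}{\gorb{1}}=0$, $\binom{\gorb{1}^c}{\gorb{1}^c}=1$ and $\binom{\gorb{2}^c}{\gorb{1}^c}=0$ leaves the value $(-1)^{k}$. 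But $k>\frac{n}{2}$ forces $2k>n$, so $E_{k,k,0}$ is the zero mapping and both matrix entries, hence their difference, vanish. The resulting equality $(-1)^{k}=0$ is absurd; the only way to escape it is that the collapse to two terms was spurious, that is $\gorb{1}=\gorb{2}$, which is exactly the assertion that $A$ and $B$ are $\Gamma$-isomorphic.

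I expect the delicate point to be the second step: confirming that the orbits of size exactly $k$ contribute equally on both sides, so that the difference collapses to precisely $(-1)^{k}$ and not to some other residue. This is where the hypothesis, which concerns only sets \emph{strictly smaller} than $A$, meets the boundary behaviour of the inclusion coefficients; the vanishing $E_{k,k,0}=0$ for $k>\frac{n}{2}$ is then the genuine engine of the argument, and everything else is bookkeeping that funnels the data into a form to which it applies.
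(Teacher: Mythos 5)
Your proposal is correct and follows essentially the same route as the paper: translating the hypomorphism hypothesis via Kelly's Lemma into equality of the coefficients $\binom{\gorb{1}}{\bl{U}}=\binom{\gorb{2}}{\bl{U}}$ for all other orbits, then deriving the contradiction $(-1)^{k}=0$ from the vanishing of $E_{k,k,0}$ when $k>\frac{n}{2}$. Your explicit check that orbits of size exactly $k$ contribute equally on both sides is a point the paper leaves implicit, but it is the same argument.
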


This result also applies to generalised orbit algebras, if one replaces the notion of isomorphism by membership in the same generalised orbit.
This theorem is in some sense best possible in the general framework of orbit algebras, and consequently, also for generalised orbit algebras. 
Indeed, there exists orbit algebras of order $2r$ with non reconstructible sets of size $r$. As an example, consider the orbit algebra of the permutation group $\Gamma$ generated by~:
\[ \left\{(1,2)(2i+1,2i+2),\quad i=1..r-1   \right\}\]

Consider the set $U=\left\{2i,\ i=2\ldots r\right\}$, and let $A=U\cup \{1\}$, $B=U\cup\{2\}$. We now have two sets $A$ and $B$ of size $r$ not in the same orbit of $\Gamma$, because every element of $\Gamma$ leaves invariant the parity of the number of even elements of sets containing exactly one element in each pair $\{2i+1,2i+2\},\ i=0\ldots r-1$. However, if one considers a subset\footnotemark[1] of size $r-1$ in $A$, then one can see that there exists exactly one pair $\{2j+1,2j+2\}$ with no element in $A$, and applying $(1,2)(2j+1,2j+2)$ we find a subset\footnotemark[1] of size $r-1$ in $B$. This defines a one to one mapping from the subsets of size $r-1$ in $A$ to the subsets of size $r-1$ in $B$ 
\footnotetext[1]{distinct from $U$}, showing that $B$ is a $\Gamma$-reconstruction of $A$.

\noindent For the sake of completeness, one can add the element $2r+1$, and get an algebra of order $2r+1$ with non-reconstructible sets of size $r$. 

It is therefore natural to try to find properties of the group that would allow us to lower the bound of $\frac{n}{2}$. Considering the order of the group yields a theorem of V.M\"uller that also applies in any generalised orbit algebra (even if there is no group to consider). We prove a slightly different result in the more general context of generalised orbit algebras~:

\begin{theorem}\label{mull}
If $\mathcal{D}$ is a generalised orbit algebra of order $n$ with generalised orbits $\gorb{1},\ldots,\gorb{s}$,  and $A$ and $B$ are two subsets of $\groundset$ with cardinality $k$, such that for every generalised orbit $\gorb{i}$ of size less than $k$, $\binom{A}{\gorb{i}}=\binom{B}{\gorb{i}}$, then for every generalised orbit $\gorb{j}$ :
\[2^{k-\vert \gorb{j}\vert-1}\leq  \binom{\gorb{j}^c}{A^c}\]
\end{theorem}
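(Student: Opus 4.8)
The plan is to fix the orbit $\gorb{j}$, write $j=\sharp\gorb{j}$ (assuming $j<k$, and that some member of $\gorb{j}$ is contained in $A$, which is the only case where the right–hand side is nonzero), and to introduce the orbits $\gorb{a}$ and $\gorb{b}$ containing $A$ and $B$. Throughout I use that $A$ and $B$ lie in \emph{different} orbits, $\gorb{a}\neq\gorb{b}$: this is the situation the statement is meant for and, without it, the bound fails. By Lemma~1, $\binom{\gorb{j}^c}{A^c}$ is the number of members of $\gorb{a}$ that contain a fixed $W\in\gorb{j}$, independently of $W$; I therefore take $W\subseteq A$. The strategy is to refine this count according to the intersection size $r=\vert A\cap X\vert$, establish a term‑by‑term lower bound, and sum.

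For $j\le r\le k$ put
\[ a_r=\left\vert\{X\in\gorb{a}:W\subseteq X,\ \vert A\cap X\vert=r\}\right\vert,\quad b_r=\left\vert\{X\in\gorb{b}:W\subseteq X,\ \vert A\cap X\vert=r\}\right\vert, \]
so that $\binom{\gorb{j}^c}{A^c}=\sum_r a_r$. Writing $g_A(T)$ (resp.\ $g_B(T)$) for the number of members of $\gorb{a}$ (resp.\ $\gorb{b}$) containing a given set $T$ — a quantity that, by Lemma~1, depends only on the orbit of $T$ — a M\"obius inversion over the interval $[W,A]$ yields
\[ a_r=\sum_{W\subseteq T\subseteq A}(-1)^{\vert T\vert-r}\binom{\vert T\vert-j}{r-j}\,g_A(T), \]
and the same identity for $b_r$ with $g_B$. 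These are exactly the expansions behind the coefficient formula for the $E_{k,k,r}$, now localized above $W$.

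The heart of the matter is to compare the two expansions. For every proper subset $T\subsetneq A$, writing $\gorb{u}$ for its orbit (of size $<k$), Lemma~1 gives $g_A(T)=\frac{\binom{\groundset}{\gorb{a}}}{\binom{\groundset}{\gorb{u}}}\binom{\gorb{a}}{\gorb{u}}$ and likewise for $B$; since $\binom{\gorb{a}}{\gorb{u}}=\binom{A}{\gorb{u}}=\binom{B}{\gorb{u}}=\binom{\gorb{b}}{\gorb{u}}$ by hypothesis, we get $g_A(T)=\lambda\,g_B(T)$ with $\lambda=\binom{\groundset}{\gorb{a}}/\binom{\groundset}{\gorb{b}}>0$, a single ratio independent of $T$. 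For $T=A$ instead, $g_A(A)=1$ while $g_B(A)=0$, the latter precisely because $A\not\cong B$. Substituting, every term with $T\subsetneq A$ cancels in $a_r-\lambda b_r$ and only $T=A$ survives:
\[ a_r-\lambda\,b_r=(-1)^{k-r}\binom{k-j}{r-j}. \]

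Finally I exploit that $a_r,b_r$ are nonnegative integers and $\lambda>0$: whenever $k-r$ is even the right‑hand side equals $+\binom{k-j}{r-j}$, so $a_r\ge\binom{k-j}{r-j}$ unconditionally. Summing over such $r$,
\[ \binom{\gorb{j}^c}{A^c}=\sum_{r}a_r\ \ge\!\!\sum_{\substack{j\le r\le k\\ k-r\ \mathrm{even}}}\!\!\binom{k-j}{r-j}=\sum_{\substack{0\le s\le k-j\\ s\equiv k-j\ (\mathrm{mod}\ 2)}}\binom{k-j}{s}=2^{k-j-1}, \]
which is the claim. I expect the main difficulty to be not conceptual but a matter of insight in the bookkeeping: one must get the M\"obius coefficient $\binom{\vert T\vert-j}{r-j}$ right, and — crucially — resist proving the clean identity $a_r-b_r=(-1)^{k-r}\binom{k-j}{r-j}$, which would force the orbit cardinalities $\binom{\groundset}{\gorb{a}},\binom{\groundset}{\gorb{b}}$ to coincide; keeping the harmless positive factor $\lambda$ and invoking only the positivity of $b_r$ is what delivers the inequality in full generality.
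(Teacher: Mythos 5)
Your proof is correct, and it runs on the same engine as the paper's: a M\"obius inversion over the sets between a small set and $A$, in which the hypothesis $\binom{A}{\gorb{i}}=\binom{B}{\gorb{i}}$ for $\sharp\gorb{i}<k$ cancels every term except the top one $T=A$, which survives precisely because $A$ and $B$ lie in distinct generalised orbits; positivity of the discarded counts then yields the bound. The bookkeeping, however, is genuinely different. The paper works globally with the orbit-level quantities $E_{A,B}^{S}$ (members of an orbit whose intersection with $A$ is a copy of $S$), derives $E_{A,A}^{T}-E_{B,A}^{T}=(-1)^{k-\vert T\vert}\binom{A}{T}$, and gets $2^{k-\sharp\gorb{j}}\binom{A}{\gorb{j}}\le 2\binom{A}{\gorb{j}}\binom{\gorb{j}^c}{A^c}$ from the triangle inequality $\vert x-y\vert\le x+y$, the factor $\tfrac12$ being the price of that inequality. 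You instead localize above a fixed copy $W\in\gorb{j}$ contained in $A$, stratify by the cardinality of $A\cap X$ rather than by its orbit, and obtain the lower bound by retaining only the terms of the favourable parity, so that $2^{k-\sharp\gorb{j}-1}$ appears as the sum of half of the binomial coefficients $\binom{k-j}{s}$. Because you count supersets ($g_A$, $g_B$) rather than sub-copies ($\binom{A}{V}$, $\binom{B}{V}$), you need the normalising ratio $\lambda=\binom{\groundset}{\gorb{a}}/\binom{\groundset}{\gorb{b}}$, an issue the paper's formulation sidesteps automatically; you handle it correctly, and your observation that the unnormalised identity would force the orbit cardinalities to coincide is exactly right. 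Both arguments share the same implicit caveats --- some member of $\gorb{j}$ must be a proper subset of $A$, and $A$, $B$ must lie in different generalised orbits --- which you make more explicit than the paper does.
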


\begin{proof}

Let $A$ be a set and $B,S$ generalised orbits, then
\[E_{A,B}^S=\sum_{V\in\gorb{}}(-1)^{\vert V\vert -\vert S\vert}\binom{V}{S}\binom{A}{V}\binom{V^c}{B^c}\]
is the number of graphs whose intersection with $A$ is exactly a copy of $S$, and that are elements of $B$.

We have~:
\[\sum_S \binom{S}{T}E_{A,B}^S=\binom{A}{T}\binom{T^c}{B^c}\]
and if B is a reconstruction of A:
\begin{eqnarray*}
E_{A,A}^T-E_{B,A}^T&=&\sum_{V\in\gorb{}}(-1)^{\vert V\vert -\vert T\vert}\binom{ V}{T}\left(\binom{A}{V}-\binom{B}{V}\right)\binom{V^c}{A^c}\\
&=&(-1)^{\vert A\vert -\vert T\vert}\binom{ A}{T}\\
\end{eqnarray*}

so 
\begin{eqnarray*}
2^{\vert A\vert-\vert S\vert}\binom{A}{S}&=&\sum_T \binom{T}{S}\vert E_{A,A}^T-E_{B,A}^T \vert\\
& \leq &\sum_T  \binom{T}{S}E_{A,A}^T +\sum_T  \binom{T}{S}E_{B,A}^T\\
&\leq&\binom{A}{S}\binom{S^c}{A^c}+\binom{B}{S}\binom{S^c}{A^c}\\
\end{eqnarray*}
hence, if $S$ contains a strict subset of $A$ (thus, of $B$), we have~:
\[2^{\vert A\vert-\vert S\vert-1}\leq  \binom{S^c}{A^c}\]

\end{proof}
Using $\gorb{j}=\left\{\emptyset\right\}$ we get~:
\begin{corollary}[M\"uller \cite{muller}]\label{muller}
If $\mathcal{D}$ is a generalised orbit algebra of order $n$ with generalised orbits $\gorb{1},\ldots,\gorb{s}$,  and $A$ and $B$ are two subsets of $\groundset$ with cardinality $k>1+\log_2\binom{\groundset}{A}$, such that for every generalised orbit $\gorb{i}$ of size less than $k$, $\binom{A}{\gorb{i}}=\binom{B}{\gorb{i}}$, then $A$ and $B$ belong to the same generalised orbit of $\mathcal{D}$.
\end{corollary}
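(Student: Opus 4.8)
The plan is to obtain the corollary as the special case $\gorb{j}=\{\emptyset\}$ of Theorem \ref{mull}, argued by contradiction. I would begin by supposing that $A$ and $B$ lie in two \emph{different} generalised orbits of $\mathcal{D}$; this is the situation in which Theorem \ref{mull} carries genuine content, its hypothesis being precisely the reconstruction condition $\binom{A}{\gorb{i}}=\binom{B}{\gorb{i}}$ for every orbit $\gorb{i}$ of size less than $k$, which is assumed here. The aim is to feed a well-chosen $\gorb{j}$ into the inequality of Theorem \ref{mull} and clash with the size bound $k>1+\log_2\binom{\groundset}{A}$.

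The decisive choice is $\gorb{j}=\{\emptyset\}$, the unique generalised orbit whose sets have size $0$; it is a legitimate choice because its member $\emptyset$ is a strict subset of $A$ (the hypothesis forces $A\neq\emptyset$, indeed $k\geq 2$). With $\vert\gorb{j}\vert=0$ the left-hand side of the inequality of Theorem \ref{mull} becomes $2^{k-1}$. For the right-hand side I would note that complementation carries the orbit $\{\emptyset\}$ to the orbit $\{\groundset\}$, so $\binom{\gorb{j}^c}{A^c}$ counts the members of the orbit of $A^c$ lying inside $\groundset$, that is, the whole cardinality of that orbit. Since complementation is an orbit-preserving involution, this orbit has the same size as the orbit of $A$, giving $\binom{\gorb{j}^c}{A^c}=\binom{\groundset}{A}$. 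Theorem \ref{mull} then reads $2^{k-1}\leq\binom{\groundset}{A}$, that is $k\leq 1+\log_2\binom{\groundset}{A}$, contradicting the hypothesis; hence $A$ and $B$ must share a generalised orbit.

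The only real care needed is in reading the coefficient $\binom{\gorb{j}^c}{A^c}$ correctly: one must recognise it as an orbit cardinality and use the complementation symmetry to identify it with $\binom{\groundset}{A}$, rather than with some count involving $\gorb{j}$ itself. Once this identification is made, the corollary is an immediate substitution into Theorem \ref{mull} followed by taking a base-$2$ logarithm.
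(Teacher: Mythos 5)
Your proposal is correct and is exactly the paper's argument: the paper derives the corollary from Theorem \ref{mull} by the single substitution $\gorb{j}=\left\{\emptyset\right\}$, and your reading of $\binom{\gorb{j}^c}{A^c}$ as the orbit size $\binom{\groundset}{A}$ via the complementation symmetry, followed by the contradiction with $k>1+\log_2\binom{\groundset}{A}$, is precisely the intended (and in the paper, unwritten) completion of that one-line deduction.
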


In the case of the orbit algebra of the group $\Gamma$, this implies that sets of size greater than $1+\log_2 \vert \Gamma\vert$ are $\Gamma$-reconstructible (see \cite{cameron}).
In the above example, the group $\Gamma$ is commutative, and of order $2^{r-1}$, so $1+\log_2\left(\Gamma\right)=r$ : this example also shows that M\"uller's theorem is best possible. By adding an appropriate number of (fixed) elements, one can construct a subgroup of $\perms$ of order $2^{r-1}$ with non-reconstructible subsets of size $r$, if $n\geq 2r$. Thus, all the limit cases of M\"uller's theorem are covered, because if $n<2r$, we know by Lov\'asz's theorem that sets of size $r$ are reconstructible.
This example shows that one cannot use the polynomial relations between the coefficients of generalised orbit algebras to improve Theorems \ref{lovasz} and \ref{muller} without introducing another generalised orbit algebra parameter than the maximum size of a generalised orbit.  

As an example, one can consider the following :

\begin{corollary}[Maynard-Siemons \cite{maynard0}]
If $\Gamma$ acts freely on $\Omega$, then the reconstruction index of $\Gamma$, defined as the least cardinality for which every set is $\Gamma$-reconstructible, is at most $5$.
\end{corollary}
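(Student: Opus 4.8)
The plan is to obtain $\Gamma$-reconstructibility of every subset of size at least $5$ as a direct consequence of Theorem~\ref{mull}, applied to the orbit algebra $\dign^\Gamma$, whose generalised orbits are precisely the $\Gamma$-orbits on $\allsets$. The essential observation is that Theorem~\ref{mull} is really a \emph{reconstructibility criterion}: if $A$ and $B$ are two sets of size $k$ sharing the same deck but lying in \emph{distinct} generalised orbits, then its conclusion $2^{k-\sharp\gorb{j}-1}\leq\binom{\gorb{j}^c}{A^c}$ must hold for \emph{every} generalised orbit $\gorb{j}$. Hence, to certify that a given $A$ of size $k$ is $\Gamma$-reconstructible, it suffices to exhibit a single generalised orbit $\gorb{j}$, with $\sharp\gorb{j}<k$ and $\binom{A}{\gorb{j}}\neq 0$, for which this inequality fails.

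First I would rewrite the right-hand side in terms of orbit sizes. By Lemma~1 (the complementation reciprocity for containment counts), for $\gorb{j}$ of size $<k$ one has
\[\binom{\gorb{j}^c}{A^c}=\frac{\binom{\groundset}{A}}{\binom{\groundset}{\gorb{j}}}\binom{A}{\gorb{j}},\]
where $\binom{\groundset}{A}$ and $\binom{\groundset}{\gorb{j}}$ denote the sizes of the orbits of $A$ and of $\gorb{j}$. Now I would invoke freeness in the single place it is needed: since $\Gamma$ acts freely, the setwise stabiliser of any singleton $\{x\}$ is trivial, so the orbit of $\{x\}$ has full size $\binom{\groundset}{\gorb{j}}=\vert\Gamma\vert$, whereas the orbit of $A$ has size $\binom{\groundset}{A}=\vert\Gamma\vert/\vert\Gamma_A\vert$ with $\Gamma_A$ the setwise stabiliser of $A$. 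Choosing any $x\in A$ and taking $\gorb{j}$ to be the orbit of $\{x\}$ (so $\sharp\gorb{j}=1<k$ and $\binom{A}{\gorb{j}}\geq 1$ because $x\in A$), the displayed formula collapses to
\[\binom{\gorb{j}^c}{A^c}=\frac{\binom{A}{\gorb{j}}}{\vert\Gamma_A\vert}\leq\binom{A}{\gorb{j}}\leq k,\]
the last bound holding because $\binom{A}{\gorb{j}}$ counts the points of $A$ lying in the point-orbit of $x$, and so is at most $\vert A\vert=k$.

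It then remains to compare this with the left-hand side $2^{k-\sharp\gorb{j}-1}=2^{k-2}$. Since $2^{k-2}>k$ for every $k\geq 5$, the inequality of Theorem~\ref{mull} fails for this $\gorb{j}$; therefore no set lying in a distinct generalised orbit can share the deck of $A$, and $A$ is $\Gamma$-reconstructible. As $A$ was an arbitrary subset of size $k\geq 5$, the reconstruction index of $\Gamma$ is at most $5$.

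The step I expect to require the most care is the precise reading of Theorem~\ref{mull}: one must check that its derivation genuinely needs $A$ and $B$ to lie in \emph{different} orbits (the identity $E_{A,A}^T-E_{B,A}^T=(-1)^{\vert A\vert-\vert T\vert}\binom{A}{T}$ relies on $\binom{V^c}{A^c}=0$ when $V$ is the orbit of $B$), so that the failure of the inequality genuinely forbids a non-isomorphic reconstruction rather than merely constraining one. The hypothesis of freeness is otherwise concentrated entirely in guaranteeing that some singleton inside $A$ has a full-sized orbit; without it the orbit of $\{x\}$ could be smaller, which would inflate $\binom{\gorb{j}^c}{A^c}$ and destroy the bound. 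I would also record that at $k=4$ one obtains $2^{k-2}=k=4$, so the strict inequality just fails, which is consistent with the value $5$ being best possible (e.g.\ for regular actions).
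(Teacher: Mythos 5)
Your proof is correct and follows essentially the same route as the paper: apply Theorem~\ref{mull} with $\gorb{j}$ a point-orbit, rewrite $\binom{\gorb{j}^c}{A^c}$ via Lemma~1 as $\frac{\binom{\groundset}{A}}{\binom{\groundset}{\gorb{j}}}\binom{A}{\gorb{j}}$, use freeness to get $\binom{\groundset}{\gorb{j}}=\vert\Gamma\vert\geq\binom{\groundset}{A}$, and conclude $2^{\vert A\vert-2}\leq\vert A\vert$, forcing $\vert A\vert\leq 4$. Your additional remarks on where the hypothesis of distinct orbits enters and on the sharpness at $k=4$ are accurate but not needed beyond what the paper records.
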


\begin{proof}
Using $\gorb{j}$ as a convenient orbit of cardinality $1$, we have $2^{\vert A\vert -2}\leq \frac{\binom{\Omega}{A}}{\binom{\Omega}{\gorb{j}}}\binom{A}{\gorb{j}}$.
As $\Gamma$ acts freely we have $\vert \Gamma\vert=\binom{\Omega}{\gorb{j}}\geq \binom{\Omega}{A}$.
We deduce that $2^{\vert A\vert -2}\leq \vert A\vert$, so $\vert A\vert \leq 4$.
\end{proof}

We refer to \cite{maynard0} for a complete classification of freely acting groups with respect to their reconstruction index.




\subsection{Generalised orbit algebras are not orbit algebras}

To construct generalised orbit algebras not arising from a group of permutations, we go back to Theorem \cite{lovasz}, and remark that according to the proof, 
any two sets $A$ and $B$ such that for every orbit distinct from $A$ and $B$ $\binom{A}{C}=\binom{B}{C}$ have size equal to $r$ only if either :
\begin{itemize}
\item elements of $A$ and $B$ are complements of each other, or
\item A and B are self-complementary
\end{itemize}

There exists invariant algebras for both cases, in the above example, the orbits of $A$ and $B$ are~: 
\begin{itemize}
\item complements of each other if $r$ is odd
\item both self complementary if $r$ is even
\end{itemize}

In these conditions, we remark that unifying the generalised orbits $A$ and $B$ yields a generalised orbit algebra. Sometimes, the resulting generalised 
orbit algebra may not be an orbit algebra, that is to say, for the first orbit algebra, there's not always an outer permutation
stabilizing every orbit except A and B, but mixing elements of A and B.

For example, if $n=8$ and $\Gamma$ is the group generated by the permutations \[(1,2)(3,4),\ (5,6)(7,8),\ (1,3,2,4)(5,7,6,8),\ (1,5)(2,6)(3,7)(4,8)\]
we take $A=\{1,3,5,7\}$ and $B=\{1,3,5,8\}$.
$A$ can be written as the union of two sets in the same orbit $O$ : $\{1,3,7\}$ and $\{3,5,7\}$, with intersection $\{3,7\}$, whereas $B$ can be written only in one way as the union of two sets of $O$ : $\{1,3,8\}$ and $\{1,5,8\}$, but the intersection $\{1,8\}$ is not in the same orbit as $\{3,7\}$.

This is in contradiction with the fact that there can be a permutation
mapping $\{1,3,5,7\}$ to $\{1,3,5,8\}$ respecting the orbits of
$\{1,3,7\}$,,$\{3,7\}$, and $\{1,8\}$.
Thus, the resulting generalised orbit algebra is not an orbit algebra, as it is not "closed" under $\com{3}$, in the sense of Theorem \ref{comkstab}.

\subsection*{Acknowledgements}
First, I am very indebted to an anonymous referee for his helpful remarks and suggestions. I also thank Guus Regts for some very interesting discussions.

Some of the results presented here were obtained during my PhD Thesis at the University Claude Bernard, Lyon, France. I would like to thank Pr J.A. Bondy for his supervision, teaching, and for his invaluable help with the redaction of this text. The counterexample was found recently during a post-doc stay at CWI.

\end{document}